\newcommand{\argmin}{\mathop{\arg\min}}
\newcommand{\Ttran}{\mathsf{T}}
\newcommand{\Htran}{\mathsf{H}}
\newcommand{\defi}{:=}
\newcommand{\normml}{\left\vert\kern-0.25ex\left\vert\kern-0.25ex\left\vert}  
\newcommand{\normmr}{\right\vert\kern-0.25ex\right\vert\kern-0.25ex\right\vert}
\newcommand{\order}{\mathcal{O}}
\newcommand{\kryl}{\mathcal{K}}		
\newcommand{\region}{\mathcal{D}}
\newcommand{\R}{\mathbb{R}}
\newcommand{\C}{\mathbb{C}}
\newcommand{\range}{\mathsf{range}}
\newcommand{\old}{(\mathrm{old})}
\newcommand{\new}{(\mathrm{new})}
\newcommand{\mpr}{\mathbf{u}}
\newcommand{\fig}{eps}
\newcommand{\figsizeD}{0.45\textwidth}
\newcommand{\figsizeT}{0.3\textwidth}
\newcommand{\figsizeQ}{0.22\textwidth}
\providecommand{\spa}[1]{\mathsf{span}\{#1\}}
\providecommand{\abs}[1]{\lvert#1\rvert}
\providecommand{\norm}[1]{\lVert#1\rVert}
\newtheorem{remark}[theorem]{Remark}
\crefname{algocf}{Algorithm}{Algorithms}
\numberwithin{equation}{section}
\begin{document}
\title{Restoring similarity in randomized Krylov methods with applications to eigenvalue problems and matrix functions
}
\author{Laura Grigori\thanks{PSI Center for Scientific Computing, Theory and Data, 5232 Villigen, Switzerland, and Institute of Mathematics, EPF Lausanne, 1015 Lausanne, Switzerland (\href{mailto:laura.grigori@epfl.ch}{laura.grigori@epfl.ch})}
\and  Daniel Kressner\thanks{Institute of Mathematics, EPF Lausanne, 1015 Lausanne, Switzerland (\href{mailto:daniel.kressner@epfl.ch}{daniel.kressner@epfl.ch} and \href{mailto:nian.shao@epfl.ch}{nian.shao@epfl.ch} and \href{mailto:nian.shao@epfl.ch}{igor.simunec@epfl.ch})}
\and Nian Shao\footnotemark[2]
\and Igor Simunec\footnotemark[2]}

    \headers{Similarity-restoring randomized Arnoldi}{Laura Grigori, Daniel Kressner, Nian Shao, Igor Simunec}
\maketitle

\begin{abstract}
	The randomized Arnoldi process has been used in large-scale scientific computing because it produces a well-conditioned basis for the Krylov subspace more quickly than the standard Arnoldi process. 
	However, the resulting Hessenberg matrix is generally not similar to the one produced by the standard Arnoldi process, which can lead to delays or spike-like irregularities in convergence.
	In this paper, we introduce a modification of the randomized Arnoldi process that restores similarity with the Hessenberg matrix generated by the standard Arnoldi process. 
	This is accomplished by enforcing orthogonality between the last Arnoldi vector and the previously generated subspace, which requires solving only one additional least-squares problem.
	When applied to eigenvalue problems and matrix function evaluations, the modified randomized Arnoldi process produces approximations that are identical to those obtained with the standard Arnoldi process. 
	Numerical experiments demonstrate that our approach is as fast as the randomized Arnoldi process and as robust as the standard Arnoldi process.
\end{abstract}
\begin{keywords}
     Krylov subspace methods, randomized sketching, eigenvalue problems, matrix functions
\end{keywords}
\begin{AMS}
    65F15, 65F25, 65F60, 68W20
\end{AMS}

\section{Introduction}

Krylov subspace methods are among the most effective approaches for solving large-scale problems, ranging from the solution of linear systems, to the computation of eigenvalues and evaluation of matrix functions. 
Given a square matrix $A \in \C^{n\times n}$ and a nonzero initial vector $b \in \C^{n}$, the $m$th Krylov subspace associated with $A$ and $b$ is generated by repeatedly multiplying $A$ with $b$: 
\begin{equation*} 
    \mathcal{K}_{m}(A,b) \defi \spa{b,Ab,\dotsc,A^{m-1}b}.
\end{equation*} 
The Arnoldi process~\cite[Sec. 10.5]{Golub2013} is used to construct an orthonormal basis $Q_{m}$ for $\mathcal{K}_{m}(A,b)$, whose main costs are $m$ matrix-vector products with $A$, and the orthogonalization of $Q_{m}$ via a Gram--Schmidt process with complexity $\order(nm^2)$. 
In particular, orthogonalization may dominate the computational cost when $m$ becomes (moderately) large. Several strategies have been considered in the literature to reduce this cost, including incomplete orthogonalization \cite{Saad1980} and restarting \cite{sorensen1992implicit}.

Recently, randomized orthogonalization has been proposed to reduce the cost of orthogonalization in the Arnoldi process, for instance with a randomized Gram--Schmidt method \cite{Balabanov2022,BalabanovGrigori25block} or with a randomized Householder QR decomposition~\cite{GrigoriTimsit24}. 
Given a (low-dimensional) subspace $\mathcal{W}\subset\C^{n}$, the randomized Gram--Schmidt method employs a subspace embedding $\Omega \in \C^{d \times n}$ to sketch vectors in $\mathcal{W}$ to $\C^d$ with $d \ll n$, while approximately preserving their norms.
Compared to standard Gram--Schmidt, the computation and communication costs are reduced because inner products are formed with sketched vectors of length $d$ rather than with original length-$n$ vectors.
Instead of an orthonormal basis $Q_{m}$, randomized Gram--Schmidt generates an $\Omega$-orthonormal basis $U_{m}$, that is, the columns of $\Omega U_{m}$ form an orthonormal basis. 
When using high-quality random subspace embeddings, such as the ones surveyed in \cite[Sec.~2]{murray2023randomized}, this ensures --- with high probability --- that $U_m$ is a well-conditioned basis.
During the last few years, the resulting randomized Arnoldi process has been employed for the solution of various large-scale
problems~\cite{TGB23,PSS25,GuettelSchweitzer23,de2024randomized,de2025randomized,grigori2025randomized,CKN24}. For a recent overview, we refer to \cite{DGST25}.

While its performance gains make randomized Arnoldi an attractive choice, the transition from orthonormality to $\Omega$-orthonormality incurs certain nuisances. An important feature of standard Arnoldi is that it not only returns an orthonormal basis $Q_m$ but also the Hessenberg matrix $G_m  = Q_m^\Htran A Q_m$. The latter plays a crucial role in algorithms; for example, the eigenvalues of $G_m$ are the so called Ritz values that approximate eigenvalues of $A$. Randomized Arnoldi also returns such an $m\times m$ Hessenberg matrix, but this matrix is, in general, \emph{not} similar to $G_m$ and, in particular, its eigenvalues are different.
This discrepancy can lead to both numerical and theoretical difficulties. For example, one may obtain complex eigenvalue approximations even when $A$ is Hermitian~\cite{de2025randomized}. More subtle issues arise in the context of linear systems and matrix functions, where convergence may be delayed~\cite{GuettelSchweitzer23} or exhibit sporadic spikes~\cite{TGB23}. 
Moreover, from a theoretical viewpoint, establishing convergence results becomes highly challenging \cite{PSS25} because the numerical range of the Hessenberg matrix is difficult to control, as it is no longer contained in the numerical range of $A$. 
All of these issues are caused by the lack of similarity between the Hessenberg matrices and can be resolved by restoring this property.

In this work, we propose a modification of randomized Arnoldi that constructs a Hessenberg matrix similar to the one generated by standard Arnoldi. The key idea is to make all basis vectors $\Omega$-orthonormal \emph{except} for the last one, which is instead orthogonal to all previous vectors. Such a basis can be obtained by performing randomized Gram--Schmidt, followed by the solution of a least-squares problem that enforces orthogonality of the last vector. This general framework enables us to recover, in exact arithmetic, the same approximate solutions as the ones obtained through standard Arnoldi, while still exploiting much of the efficiency of randomized Gram--Schmidt. We demonstrate this approach by applying it to the randomized Krylov--Schur algorithm for eigenvalue problems and the randomized Arnoldi method for computing the action of matrix functions $f(A)b$.
For eigenvalue problems, in addition to establishing its equivalence with standard Krylov--Schur in exact arithmetic, we also prove a backward stability result analogous to that of the standard Krylov--Schur method in \cite{stewart2002krylov}.
Let us remark that the application to matrix functions is closely related to one of the approaches considered in \cite{CKN24}; see Remark~\ref{rem:matfun-comparison-with-ckn} below for more details.

The remainder of the manuscript is organized as follows. In \cref{sec:algorithm-description} we review the standard and randomized Arnoldi process and introduce our similarity-restoring correction to randomized Arnoldi. In \cref{sec:eigenvalue} we apply the proposed framework to the randomized Krylov--Schur algorithm for eigenvalue problems, and in \cref{sec:matrix-functions} we employ it for the computation of matrix functions. Numerical comparisons with both standard and randomized algorithms are presented in \cref{sec:numerical-experiments}.

\section{A new variant of the randomized Arnoldi process}
\label{sec:algorithm-description}
\subsection{Arnoldi and randomized Arnoldi process}
\label{subsec:arnoldi-process}
\paragraph{Arnoldi process}

We start by briefly recalling the standard Arnoldi process, which first normalizes the initial vector by setting $q_1 = b/\|b\|$ and then computes, for $k =1,2,\ldots$, the basis vector $q_{k+1}$ by performing a matrix-vector product with $A$ and one step of the (classical) Gram--Schmidt process:
\begin{equation*}
    w = A q_{k},\quad g_{k}=Q_{k}^{\Htran}w,\quad \widetilde{q}_{k+1} = w-Q_{k}g_{k},\quad g_{k+1,k} = \norm{\widetilde{q}_{k+1}},\quad q_{k+1} = \widetilde{q}_{k+1}/g_{k+1,k}.
\end{equation*}
Here and in the following, we will ignore the rare (and usually fortunate) event that the Arnoldi process breaks down because of $\widetilde{q}_{k+1} = 0$. 
Performing $m$ steps, collecting the vectors $q_k$ in an orthonormal matrix $Q_{m}$ and the coefficients $g_k,g_{k+1,k}$ in an upper Hessenberg matrix $G_{m}$, we obtain the Arnoldi decomposition
\begin{equation}
    \label{eq:AD}
    AQ_{m} = Q_{m}G_{m}+g_{m+1,m}q_{m+1}e_{m}^{\Htran},
\end{equation}
where $e_{m} \in \C^m$ denotes the $m$th unit vector.
When $A$ is Hermitian, the Arnoldi process reduces to the Lanczos process, and the Hessenberg matrix $G_{m}$ becomes a tridiagonal Hermitian matrix.

\paragraph{Randomized Arnoldi process}

As mentioned in the introduction, orthogonalization can dominate the computational cost in the Arnoldi process. To accelerate this part, one can replace the Gram--Schmidt process by the randomized Gram--Schmidt process \cite{Balabanov2022}.
To describe this process, we need the following definition.

\begin{definition}
	\label{defn:epsilon-subspace-embedding}
	Given $\varepsilon \in (0,1)$, a matrix $\Omega \in \C^{d \times n}$ is called an $\varepsilon$-subspace embedding for a subspace $\mathcal{W} \subset \C^{n}$ if 
	\begin{equation*}
		(1-\varepsilon)\norm{w}^2 \le \norm{\Omega w}^2 \le (1+\varepsilon)\norm{w}^2, \quad \forall w \in \mathcal W.
	\end{equation*}     
\end{definition}
The matrix $\Omega$ from \Cref{defn:epsilon-subspace-embedding} is usually called sketching matrix, and it has the benefit of reducing the length of a vector from $n$ to $d$, while approximately preserving its norm. Sketching makes sense when the dimension $m$ of the subspace  $\mathcal{W}$ is much smaller than $n$.
In the context of randomized Arnoldi, $\mathcal{W}$ refers to the Krylov subspace $\mathcal{K}_{m+1}(A,b)$, which is actually of dimension $m+1$.

Often, the subspace $\mathcal{W}$ is unknown, or it may change during the execution of the algorithm.
In these situations, it is convenient to use an \emph{oblivious $\varepsilon$-subspace embedding}, that is, a sketching matrix $\Omega$ that is (necessarily) random and represents a $\varepsilon$-subspace embedding for an \emph{arbitrary} subspace of dimension $m$ with probability $1-\delta$ for some $\delta \in (0,1)$. Several different ways to construct such oblivious subspace embeddings have been proposed in the literature; see, e.g., \cite{MartinssonTropp20} for an overview. A Gaussian random sketching matrix $\Omega$ comes with strong theoretical guarantees: A sketching dimension $d = \mathcal{O}(\varepsilon^{-2}(m + \log(1/\delta)))$ suffices to obtain an oblivious $\varepsilon$-subspace embedding~\cite[Thm.~2.3]{Woodruff14}. 
However, in many scenarios, it is preferable to use sketching matrices that admit fast matrix-vector products. In this work, we construct $\Omega \in \R^{d \times n}$ as a sparse sign matrix (see, e.g.,\cite[Sec.~9.2] {MartinssonTropp20}): Each column of $\Omega$ is an i.i.d.~random vector, with $\xi$ random sign entries located at random positions. It was shown in \cite[Cor.~2.2]{CDD25} that $\Omega$ is an oblivious $\varepsilon$-subspace embedding provided that $d = \mathcal{O}(\varepsilon^{-2}(m + \log(m/\delta)))$ and $\xi = \mathcal{O}(\varepsilon^{-1}\log(m/\delta)^{5/2} + \log(m/\delta)^4)$. 
In practice, simply setting $\xi = 8$ has been observed to yield good results~\cite{TYUC19}, which will be used in our experiments as well.

Given an $\varepsilon$-subspace embedding $\Omega\in\C^{d\times n}$ for $\mathcal{W}\subset\C^{n}$, where $d\ll n$, we say that two vectors $w_{1}$ and $w_{2}$ in $\mathcal{W}$ are $\Omega$-orthogonal if $\Omega w_{1}$ and $\Omega w_{2}$ are orthogonal in $\C^{d}$.
Given a basis $W$ (not necessarily $\Omega$-orthonormal), a new vector $w$ is $\Omega$-orthogonalized with respect to $W$ by carrying out one step of randomized Gram--Schmidt:
\begin{equation} \label{eq:randgs}
     w - W(\Omega W)^{\dagger}\Omega w,
\end{equation}
where $(\cdot)^\dagger$ denotes the Moore--Penrose pseudoinverse of a matrix.

Replacing the Gram--Schmidt process in Arnoldi with its randomized variant~\eqref{eq:randgs} leads to the randomized Arnoldi process, which first $\Omega$-normalizes the initial vector 
by setting $u_1 = b/\|\Omega b\|$ and then computes, for $k =1,2,\ldots$, the basis vector $u_{k+1}$ as follows:
\begin{equation*}
    w = Au_{k},\quad h_{k}=(\Omega U_{k})^{\dagger}(\Omega w),\quad \widetilde{u}_{k+1} = w-U_{k}h_{k},\quad h_{k+1,k} = \norm{\Omega \widetilde{u}_{k+1}},\quad u_{k+1} = \frac{\widetilde{u}_{k+1}}{h_{k+1,k}}.
\end{equation*}
Performing $m$ steps of this process generates a so-called \emph{randomized Arnoldi decomposition}~\cite{Balabanov2022}:
\begin{equation}
    \label{eq:rAD}
    AU_{m}=U_{m}H_{m}+h_{m+1,m}u_{m+1}e_{m}^{\Htran},
\end{equation}
where $[U_{m},u_{m+1}]$ contains $\Omega$-orthonormal columns.
The $m\times m$ matrix $H_{m}$ is upper Hessenberg and collects the quantities
$h_k$  and $h_{k+1,k}$, just as in the standard Arnoldi process.
It is worth noting that, in contrast to standard Arnoldi, the matrix 
$H_{m}$ is, in general, neither Hermitian nor tridiagonal when $A$ is Hermitian.

\subsection{Restoring similarity}
\label{subsec:newdecomp}

Suppose that we start the Arnoldi process and the randomized Arnoldi process with the same (nonzero) vector $b$, that is, $q_{1}$ in~\cref{eq:AD} and  $u_{1}$ in~\cref{eq:rAD} are collinear. 
As already discussed in the introduction, the corresponding Hessenberg matrices $G_{m}$ and $H_{m}$ are, in general, not similar, which incurs theoretical and practical issues. In the following, we propose a strategy to modify the randomized Arnoldi process and restore the similarity of the Hessenberg matrix.

Given $A\in\C^{n\times n}$, consider the following \emph{Krylov decomposition}:
\begin{equation}
    \label{eq:OKD}
    AU_{m} = U_{m}\widehat{H}_{m}+\widehat{u}_{m+1}c_{m}^{\Htran} 
    \quad \text{and}\quad 
    U_{m}^{\Htran}\widehat{u}_{m+1}=0,
\end{equation}
where $U_{m}\in\C^{n\times m}$ is assumed to be a full-rank matrix (\emph{not} necessarily orthonormal or $\Omega$-orthonormal) and $c_{m}\in\C^{m}$ is a general vector (not necessarily a scalar multiple of $e_{m}$).
Obviously, the Arnoldi decomposition~\cref{eq:AD} takes the form~\cref{eq:OKD}. While 
the randomized Arnoldi decomposition \cref{eq:rAD} takes the form of the more general Krylov decomposition introduced by Stewart~\cite{stewart2002krylov}, it does not take the form~\eqref{eq:OKD} because the orthogonality condition $U_{m}^{\Htran}\widehat{u}_{m+1}=0$ is not satisfied.

A Krylov decomposition of the form~\eqref{eq:OKD} implies a similarity relation between $\widehat{H}_{m}$ and the Hessenberg matrix generated by the standard Arnoldi process.

\begin{theorem}
    \label{thm:OKD}
    Given the Krylov decomposition \cref{eq:OKD}, let $\widehat{Q}_{m}$ be an orthonormal basis of $\range(U_{m})$. Then $\widehat{H}_{m}$ and $\widehat{G}_{m} := \widehat{Q}_{m}^{\Htran}A\widehat{Q}_{m}$ are similar. Moreover, the following Krylov decomposition holds:
    \begin{equation}
        \label{eq:aKD}
        A\widehat{Q}_{m}=\widehat{Q}_{m}\widehat{G}_{m}+\widehat{u}_{m+1}\widehat{c}_{m}^{\Htran},
        \quad\text{where}\quad \widehat{c}_{m} = (\widehat{Q}_{m}^{\Htran}U_{m})^{-\Htran}c_{m}, 
        \quad \text{and}\quad      \widehat{Q}_{m}^{\Htran}\widehat{u}_{m+1}=0.
    \end{equation}
\end{theorem}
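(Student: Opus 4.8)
The plan is to observe that \cref{eq:OKD} is just an Arnoldi-type recurrence written in the (non-orthonormal) basis $U_{m}$, and to pass to the orthonormal basis $\widehat{Q}_{m}$ by a change of basis. Since $U_{m}$ has full column rank and $\range(\widehat{Q}_{m}) = \range(U_{m})$, we have $U_{m} = \widehat{Q}_{m}\widehat{Q}_{m}^{\Htran}U_{m} =: \widehat{Q}_{m}R$ with $R := \widehat{Q}_{m}^{\Htran}U_{m} \in \C^{m\times m}$; comparing ranks (or viewing $\widehat{Q}_{m}R$ as a thin QR-type factorization of $U_{m}$) shows that $R$ is nonsingular. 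This $R$ will turn out to be the similarity transformation.

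First I would substitute $U_{m} = \widehat{Q}_{m}R$ into the first identity of \cref{eq:OKD} to get $A\widehat{Q}_{m}R = \widehat{Q}_{m}R\widehat{H}_{m} + \widehat{u}_{m+1}c_{m}^{\Htran}$, and then right-multiply by $R^{-1}$, obtaining
\[
    A\widehat{Q}_{m} = \widehat{Q}_{m}\bigl(R\widehat{H}_{m}R^{-1}\bigr) + \widehat{u}_{m+1}\bigl(R^{-\Htran}c_{m}\bigr)^{\Htran}.
\]
Next I would use the orthogonality condition in \cref{eq:OKD}: since $U_{m}^{\Htran}\widehat{u}_{m+1} = 0$ means $\widehat{u}_{m+1} \perp \range(U_{m}) = \range(\widehat{Q}_{m})$, we also have $\widehat{Q}_{m}^{\Htran}\widehat{u}_{m+1} = 0$, which is one of the claimed relations in \cref{eq:aKD}. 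Left-multiplying the displayed identity by $\widehat{Q}_{m}^{\Htran}$ and using $\widehat{Q}_{m}^{\Htran}\widehat{Q}_{m} = I$ together with $\widehat{Q}_{m}^{\Htran}\widehat{u}_{m+1} = 0$ then yields $\widehat{G}_{m} = \widehat{Q}_{m}^{\Htran}A\widehat{Q}_{m} = R\widehat{H}_{m}R^{-1}$, establishing the similarity of $\widehat{H}_{m}$ and $\widehat{G}_{m}$. Substituting $R\widehat{H}_{m}R^{-1} = \widehat{G}_{m}$ back into the display gives exactly \cref{eq:aKD} with $\widehat{c}_{m} = R^{-\Htran}c_{m} = (\widehat{Q}_{m}^{\Htran}U_{m})^{-\Htran}c_{m}$.

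There is no substantial obstacle here: the argument is essentially a one-line change of basis once the right factor $R = \widehat{Q}_{m}^{\Htran}U_{m}$ is identified. The only two points requiring a word of care are (i) the invertibility of $R$, which follows from the full-rank hypothesis on $U_{m}$, and (ii) the equivalence of $U_{m}^{\Htran}\widehat{u}_{m+1}=0$ with $\widehat{Q}_{m}^{\Htran}\widehat{u}_{m+1}=0$ (equality of ranges), since it is exactly this that makes the rank-one correction term carry over unchanged while the cross term vanishes upon projecting onto $\widehat{Q}_{m}$.
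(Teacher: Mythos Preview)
Your proof is correct and follows essentially the same change-of-basis argument as the paper: define $R=\widehat{Q}_{m}^{\Htran}U_{m}$, write $U_{m}=\widehat{Q}_{m}R$, and right-multiply \cref{eq:OKD} by $R^{-1}$. You are in fact slightly more explicit than the paper in checking that $R\widehat{H}_{m}R^{-1}$ coincides with $\widehat{Q}_{m}^{\Htran}A\widehat{Q}_{m}$ and that $\widehat{Q}_{m}^{\Htran}\widehat{u}_{m+1}=0$, both of which the paper leaves implicit.
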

The relation \cref{eq:aKD} plays an important role in our further analysis. In the following, we refer to~\cref{eq:aKD} as \emph{orthonormal Krylov decomposition associated with~\cref{eq:OKD}}. 
\begin{proof}[Proof of \cref{thm:OKD}]
Because $\widehat{Q}_{m}$ is an orthonormal basis of $\range(U_{m})$, the matrix $R:=\widehat{Q}_{m}^{\Htran}U_{m}$ is nonsingular and $U_{m}=\widehat{Q}_{m}\widehat{Q}_{m}^{\Htran}U_{m} = \widehat{Q}_{m} R$ holds. Multiplying both sides of~\eqref{eq:OKD} with $R^{-1}$ gives
\[
A \widehat{Q}_{m} = U_m  \widehat{H}_{m} R^{-1} +\widehat{u}_{m+1} {c}_{m}^{\Htran}R^{-1}  = \widehat{Q}_m R  \widehat{H}_{m} R^{-1} +\widehat{u}_{m+1} \widehat{c}_{m}^{\Htran}.
\]
Setting $\widehat{G}_m:= R  \widehat{H}_{m} R^{-1}$ completes the proof.
\end{proof}

\noindent Let us point out the following two important properties of the Krylov decomposition~\cref{eq:OKD}:
\begin{enumerate}
    \item The residual 
    is orthogonal to the subspace: $AU_{m}-U_{m}\widehat{H}_{m}\perp \range(U_{m})$;
    \item $\widehat{H}_m$ is similar to a matrix $\widehat{G}_m$ whose numerical range satisfies $W(\widehat{G}_m)\subset W(A)$, where we recall that
	$
		W(A) \defi \bigl\{ w^{\Htran} Aw \mid \|w\| = 1  \bigr\}.
	$
	In particular, the eigenvalues of $\widehat{H}_m$ are always contained in $W(A)$, although, in general, $W(\widehat H_m) \not\subset W(A)$. Among others, this implies that $\widehat{H}_m$ has real eigenvalues when $A$ is a Hermitian.

\end{enumerate}

\cref{thm:OKD} also works the other way. Suppose one has an Arnoldi decomposition~\cref{eq:AD} with orthonormal basis $Q_{m}$ and Hessenberg matrix $G_m$. This clearly takes the form~\eqref{eq:aKD}. Now, if one has another decomposition~\cref{eq:OKD} such that
$\range(Q_{m})=\range(U_{m})$ then
\cref{thm:OKD} implies that ensuring the orthogonality condition $U_m^\Htran\widehat{u}_{m+1} = 0$ ensures the similarity of 
$\widehat{H}_{m}$ and $G_{m}$.

\paragraph{Implications for linear systems}
Suppose that we want to solve a linear system $Ax = b$ with the full orthogonalization method (FOM), which was proposed for the general non-Hermitian case in~\cite{Saad81} and further analyzed in \cite{EiermannErnst01}. The defining property of the  approximate solution $x_m \in \kryl_m(A, b)$ computed by FOM is that its residual satisfies the Galerkin condition: $A x_m - b \perp \kryl_m(A, b)$. When $A$ is Hermitian positive definite, FOM reduces to the classical conjugate gradient (CG) method~\cite{HestenesStiefel52}, and $x_m$ minimizes the $A$-norm of the error over the Krylov subspace.
Given the Arnoldi decomposition \cref{eq:AD} with $q_1 = b/\norm{b}$, the approximate solution computed by FOM is given by $x_m = Q_m G_m^{-1} \norm{b} e_1$, where $e_1 \in \C^m$ denotes the first unit vector.

Randomized FOM~\cite{TGB23} employs a randomized Arnoldi process and imposes the sketched Galerkin condition $\Omega(A x_m - b) \perp \Omega \kryl_m(A, b)$ on the residual. The resulting approximation differs from FOM and, indeed, the convergence of randomized FOM may exhibit irregularities and spikes: see, for instance, the numerical experiments in \cite[Sec.~5]{TGB23}.

On the other hand, \emph{if} we had a Krylov decomposition~\cref{eq:OKD} with $u_1 = b/\beta$ for some $\beta \in \R$, we could cheaply recover the FOM approximation even if $U_m$ is not orthonormal. Indeed, letting $x_m = U_m y_m$ and imposing $A x_m - b \perp \range(U_m)$ gives us
\begin{equation*}
	0 = U_m^\dagger A U_m y_m - \beta U_m^\dagger U_m e_1 = \widehat{H}_m y_m - \beta e_1 \quad \implies \quad x_m = \beta U_m \widehat{H}_m^{-1} e_1,
\end{equation*}
where we used $U_m^\dagger A U_m = \widehat{H}_m$, which follows from \cref{eq:OKD}.

We will show in \cref{sec:eigenvalue} and \cref{sec:matrix-functions} that analogous conclusions apply to eigenvalue computations and matrix function evaluations, respectively.

\subsection{Similarity-restoring randomized Arnoldi}
\label{subsec:newrAD}

In view of the discussion above, it remains to relate randomized Arnoldi decompositions to a Krylov decomposition of the form~\cref{eq:OKD}. Given a randomized Arnoldi decomposition~\cref{eq:rAD}, we
solve the linear least-squares problem 
\begin{equation}
	\label{eqn:correction-least-squares-problem}
    \widehat{h}_{m}=  \argmin_{h \in \C^m} \norm{U_m h - u_{m+1}}=U_{m}^{\dagger}u_{m+1} = \big( U_m^\Htran U_m\big)^{-1}U_m^\Htran u_{m+1}.
\end{equation}
We then rearrange the randomized Arnoldi decomposition \cref{eq:rAD} as follows:
\begin{equation}
    \label{eq:rKd2oKD}
    \begin{aligned}
        AU_{m}&=U_{m}H_{m}+h_{m+1,m}u_{m+1}e_{m}^{\Htran} \\ 
        &=U_{m}(H_{m}+h_{m+1,m}\widehat{h}_{m}e_{m}^{\Htran})+h_{m+1,m}(u_{m+1}-U_{m}\widehat{h}_{m})e_{m}^{\Htran}\\ 
        &=U_{m}\widehat{H}_{m}+\widehat{u}_{m+1}c_{m}^{\Htran},
    \end{aligned}
\end{equation}
where we set
\begin{equation}
 \label{eq:updatearnoldi}\widehat{u}_{m+1} := u_{m+1}-U_{m}\widehat{h}_{m},\quad c_m := h_{m+1,m} e_m,\quad \widehat{H}_{m} := H_{m}+ \widehat{h}_{m}c_{m}^{\Htran}.
\end{equation}
By definition~\eqref{eqn:correction-least-squares-problem} of $\widehat{h}_{m}$, it follows that $U_m^\Htran\widehat{u}_{m+1} = 0$. In particular, the Krylov decomposition~\eqref{eq:rKd2oKD} takes the form~\cref{eq:OKD}.

We call the process above \emph{similarity-restoring randomized Arnoldi}, summarized in \cref{algo:RAC}. 
\begin{algorithm2e}[htbp]
	\caption{Similarity-restoring randomized Arnoldi} \label{algo:RAC}
    Obtain randomized Arnoldi decomposition~\cref{eq:rAD}\;
    Solve linear least-squares problem~\cref{eqn:correction-least-squares-problem}\;
    Construct Krylov decomposition~\cref{eq:rKd2oKD} by computing the update~\cref{eq:updatearnoldi}\;
\end{algorithm2e}

A common way to solve the least-squares problem \cref{eqn:correction-least-squares-problem} is to
form the Gram matrix $U_m^\Htran U_m$, compute its Cholesky decomposition $U_m^\Htran U_m = L L^\Htran$, and then solve
\begin{equation*}
	\widehat{h}_m = (U_m^\Htran U_m)^{-1} U_m^\Htran u_{m+1} = L^{-\Htran} L^{-1} U_m^\Htran u_{m+1}.
\end{equation*}
In the following cost analysis, flop counts are reported assuming real-valued arithmetic for simplicity.
The most costly part of this approach is the computation of the Gram matrix $U_{m}^{\Htran}U_{m}$, which requires $m^{2}n$ flops, provided that one exploits symmetry and only computes the upper triangular part of $U_{m}^{\Htran}U_{m}$.\footnote{This is done, for instance, by the BLAS routine \texttt{syrk}, and appears to be automatically carried out when performing matrix-matrix multiplication in MATLAB.}
The $\Omega$-orthogonalization in the randomized Arnoldi process costs another $m^{2}n$ flops for updating the columns of $U_m$, plus lower-order terms for computing the sketched vectors and the inner products between sketched vectors. Thus, in addition to the matrix-vector products with $A$, the leading term in the computational cost of \cref{algo:RAC} is $2 m^{2}n$ flops.  

The orthogonalization of $Q_m$ using classical/modified Gram--Schmidt (CGS/MGS) within the standard Arnoldi process also costs $2 m^{2}n$ flops. Thus, \cref{algo:RAC} requires the same amount of flops as the standard Arnoldi process and twice as much compared to randomized Arnoldi without the similarity-restoring correction.  However, \cref{algo:RAC} significantly benefits from the fact that half of its flops related to orthogonalization are in the computation of $U_m^\Htran U_m$, which is a BLAS-3 operation and can take full advantage of modern computational hardware. Indeed, in \cref{sec:numerical-experiments} we will see that, in practice, correction only yields a minor increase to the computational time of randomized Arnoldi.
Note that we expect the basis $U_m$ to be well-conditioned and, hence, the solution of the least-squares problem via Cholesky is numerically stable. 

An alternative approach to solving the least-squares problem \cref{eqn:correction-least-squares-problem} is to use an iterative method such as LSQR \cite{PaigeSaunders82}. Because $U_m$ is well-conditioned, there is no need to use a preconditioner.
Each iteration of LSQR involves one matrix-vector product with $U_m$ and one with $U_m^\Htran$, at a cost of approximately $4 mn$ flops per iteration. The total cost for $\ell$ iterations of LSQR is thus about $4 \ell mn$ flops, which implies that we can only expect LSQR to be more efficient than the Cholesky approach if LSQR converges in $\ell < m/4$ iterations. In practice, since LSQR is inherently sequential and the computation of the Gram matrix exploits BLAS-3 operations, we anticipate that $\ell$ actually needs to be significantly smaller than $m/4$ in order to observe a practical speedup by using LSQR instead of the Cholesky approach. The numerical comparisons in \cref{sec:numerical-experiments} support this conclusion and demonstrate that Cholesky is usually the more efficient approach, except when the subspace dimension $m$ is rather large and/or low accuracy for the solution of \cref{eqn:correction-least-squares-problem} is sufficient.

In principle, the similarity-restoring correction~\cref{eq:rKd2oKD} to the randomized Arnoldi decomposition~\cref{eq:rAD} could also be employed when the latter is obtained implicitly through \emph{whitening} a basis $W_m$ computed via a cheap partial orthogonalization method, such as the $k$-truncated Arnoldi algorithm; 
see, e.g., \cite[Sec.~3]{PSS25} for more details. 
In this work, we do not consider this variant: although for simple problems it may work effectively, for more difficult ones it would be nontrivial to implement the solution of the least-squares problem \cref{eqn:correction-least-squares-problem} robustly and efficiently. Indeed, solving~\cref{eqn:correction-least-squares-problem} accurately is challenging because of the ill-conditioning of $W_m$, and in turn computing such a solution would increase the computational complexity of whitening. Addressing these aspects would require further investigation and is beyond the scope of this work.

\section{Application to eigenvalue problems}
\label{sec:eigenvalue}

In this section, we apply the similarity-restoring method described in \cref{algo:RAC} in the context of the Arnoldi method for computing a few exterior eigenvalues and the corresponding eigenvectors of a large-scale matrix $A$.
\subsection{Arnoldi, Krylov--Schur and randomized Krylov--Schur method}
\label{subsec:krylov-schur-method}

\paragraph{Basic Arnoldi method} Given an Arnoldi decomposition \cref{eq:AD}, the basic Arnoldi method extracts approximate eigenpairs by the Rayleigh--Ritz process. More specifically, it first computes an ordered Schur decomposition
\begin{equation}
    \label{eq:schurord}
    G_{m}\begin{bmatrix}
        V_{\ell} & V_{\perp}
    \end{bmatrix} = \begin{bmatrix}
        V_{\ell} & V_{\perp}
    \end{bmatrix}\begin{bmatrix}
        S_{\ell} & S_{\ell\perp}\\ 
        0 & S_{\perp}
    \end{bmatrix}.
\end{equation}
The eigenvalues of $G_{m}$ are called Ritz values and, by eigenvalue reordering~\cite{Kressner2006}, it is ensured that the $\ell < m$ \emph{wanted} Ritz values, which approximate eigenvalues of interest, appear on the diagonal of the 
$\ell \times \ell$ upper triangular matrix $S_{\ell}$. The matrix $Q_m V_{\ell}$ is an orthonormal basis for a subspace that approximates the corresponding invariant subspace. Approximate eigenvectors, referred to as Ritz vectors, are obtained by multiplying $Q_m V_{\ell}$ with eigenvectors of $S_{\ell}$.

\paragraph{Restarting with the Krylov--Schur method}
A significant challenge of the basic Arnoldi method arises from the need for storing $Q_m$. Slow convergence leads to large $m$ and the available memory may be exhausted long before satisfactory approximation quality is achieved. In addition, the cost of Gram--Schmidt orthogonalization in the Arnoldi process increases quadratically with $m$. 
Popular algorithms for solving large-scale eigenvalue problems, such as Sorensen's Implicitly Restarted Arnoldi (IRA) method~\cite{sorensen1992implicit} and Stewart's Krylov--Schur method~\cite{stewart2002krylov}, address this challenge by combining the Arnoldi process with restarting. Consider an orthonormal Krylov decomposition
\begin{equation*}
    AQ_{m}=Q_{m}G_{m}+q_{m+1}{c}_{m}^{\Htran},
\end{equation*}
where $[Q_{m},q_{m+1}]\in\C^{n\times (m+1)}$ is orthonormal, $G_{m}$ is not necessarily upper Hessenberg and ${c}_{m}\in\C^{m}$ is a general vector (not necessarily a scalar multiple of $e_{m}$). Given an ordered Schur decomposition~\cref{eq:schurord} of $G_{m}$,
the Krylov--Schur method performs restarting by compressing the orthonormal Krylov decomposition from order $m$ to order $\ell< m$: 
\begin{equation*}
    A(Q_{m}V_{\ell}) = Q_{m}G_{m}V_{\ell}+q_{m+1}c_{m}^{\Htran}V_{\ell} =  (Q_{m}V_{\ell})S_{\ell}+q_{m+1}(V_{\ell}^{\Htran}c_{m})^{\Htran}.
\end{equation*}
After restarting, one can expand an order-$\ell$ Krylov decomposition to order $m$ by means of the Arnoldi process and keep iterating this procedure until convergence.

\paragraph{Randomized Krylov--Schur method} 

To reduce the cost of orthogonalization,
de Damas and Grigori combined the randomized Gram--Schmidt process with IRA~\cite{de2024randomized} and Krylov--Schur~\cite{de2025randomized}. Specifically, replacing the Arnoldi process in the Krylov--Schur method with the randomized Arnoldi process yields randomized Krylov decompositions of the form
\begin{equation*}
    AU_{m} = U_{m}H_{m}+u_{m+1}{c}_{m}^{\Htran},
\end{equation*}
where $[U_{m},u_{m+1}]$ is $\Omega$-orthonormal. Performing an ordered Schur decomposition of $H_{m}$, analogous to \cref{eq:schurord}, we can restart the randomized Krylov decomposition by compressing it from order $m$ to order $\ell < m$. Similarly, after restarting, the decomposition is expanded again to order $m$ using the randomized Arnoldi process, which preserves the $\Omega$-orthonormality of the basis.
Note that the eigenvalues of $H_{m}$ are, in general, \emph{not} Ritz values of $A$.

\subsection{A new randomized Krylov--Schur method that restores similarity} 

Inheriting this drawback from the randomized Arnoldi process discussed in \cref{subsec:newdecomp}, the randomized Krylov--Schur method does not yield a matrix $H_m$ that is similar to the matrix $G_m$ from standard Krylov--Schur. In the following, we propose a new randomized Krylov--Schur method that restores similarity to $G_m$ by leveraging the similarity-restoring randomized Arnoldi process described in \cref{subsec:newrAD}.

\paragraph{Initialization} 
To initialize the first restart cycle, the similarity-restoring randomized Arnoldi method (\cref{algo:RAC}) is applied to obtain an order-$m$ Krylov decomposition of the form \cref{eq:OKD}.
\paragraph{Restarting}
Suppose that we start from an order-$m$ Krylov decomposition 
\begin{equation}
    \label{eq:okdold}
    AU_{m}^{\old} = U_{m}^{\old}\widehat{H}_{m}^{\old} + \widehat{u}_{m+1}^{\old}(c_{m}^{\old})^{\Htran}
    \quad \text{and}\quad  (U_{m}^{\old})^{\Htran}\widehat{u}_{m+1}^{\old}=0.
\end{equation}
Because of \cref{thm:OKD}, the eigenvalues of $\widehat{H}_{m}^{\old}$ are Ritz values of $A$ with respect to $\range\big( U_{m}^{\old} \big)$. Using an ordered Schur decomposition
\begin{equation}
    \label{eq:Schur}
    \widehat{H}_{m}^{\old}\begin{bmatrix}
        \widehat{V}_{\ell}^{\old} & \widehat{V}_{\perp}^{\old}
    \end{bmatrix}=
    \begin{bmatrix}
        \widehat{V}_{\ell}^{\old} & \widehat{V}_{\perp}^{\old}
    \end{bmatrix}
    \begin{bmatrix}
        \widehat{S}_{\ell}^{\old} & \widehat{S}_{\ell\perp}^{\old}\\ 
        & \widehat{S}_{\perp}^{\old}
    \end{bmatrix},
\end{equation}
the wanted Ritz values are collected in the $\ell \times \ell$ upper triangular matrix $\widehat{S}_{\ell}^{\old}$. Then~\cref{eq:okdold} is compressed into an order-$\ell$ Krylov decomposition by multiplying both sides with $\widehat{V}_{\ell}^{\old}$:
\begin{equation}
    \label{eq:okdnewell}
    \begin{aligned}
        AU_{\ell}^{\new} &= 
    AU_{m}^{\old}\widehat{V}_{\ell}^{\old} =
    U_{m}^{\old}\widehat{H}_{m}^{\old}\widehat{V}_{\ell}^{\old} + \widehat{u}_{m+1}^{\old}(c_{m}^{\old})^{\Htran}\widehat{V}_{\ell}^{\old} \\ 
    &= U_{m}^{\old}\widehat{V}_{\ell}^{\old}\widehat{S}_{\ell}^{\old} + \widehat{u}_{m+1}^{\old}(c_{m}^{\old})^{\Htran}\widehat{V}_{\ell}^{\old}
    =
    U_{\ell}^{\new}\widehat{H}_{\ell}^{\new} + \widehat{u}_{\ell+1}^{\new}(c_{\ell}^{\new})^{\Htran},
    \end{aligned}
\end{equation}
where
\begin{equation*}
    U_{\ell}^{\new}=U_{m}^{\old}\widehat{V}_{\ell}^{\old},\quad \widehat{H}_{\ell}^{\new} = \widehat{S}_{\ell}^{\old},\quad \widehat{u}_{\ell+1}^{\new} = \widehat{u}_{m+1}^{\old},\quad c_{\ell}^{\new} = (\widehat{V}_{\ell}^{\old})^{\Htran}c_{m}^{\old}.
\end{equation*}
\paragraph{Expanding}
To expand \cref{eq:okdnewell} back into an order-$m$ Krylov decomposition, we first perform $m-\ell$ steps of randomized Arnoldi process to obtain 
\begin{equation}
    \label{eq:okdnewm0}
    AU_{m}^{\new} = U_{m}^{\new}H_{m}^{\new} + u_{m+1}^{\new}(c_{m}^{\new})^{\Htran},
\end{equation}
where $U_{m}^{\new}=[U_{\ell}^{\new},\widehat{u}_{\ell+1}^{\new},u_{\ell+2}^{\new},\dotsc,u_{m}^{\new}]$ and $H_{m}^{\new}$ contains
$\widehat{H}_{\ell}^{\new}$ as well as the coefficients from the randomized Gram--Schmidt process. At the end of this cycle, we solve a least-squares problem and use the reformulation~\cref{eq:rKd2oKD} to transform~\eqref{eq:okdnewm0} into the following Krylov decomposition:
\begin{equation}
    \label{eq:okdnewm1}
    AU_{m}^{\new} = U_{m}^{\new}\widehat{H}_{m}^{\new} + \widehat{u}_{m+1}^{\new}(c_{m}^{\new})^{\Htran}
    \quad\text{and}\quad 
    (U_{m}^{\new})^{\Htran}\widehat{u}_{m+1}^{\new} = 0.
\end{equation}
This takes the form~\cref{eq:okdold}, which allows us to start the next cycle.

\paragraph{Sketched reorthogonalization (optional)}

Note that \cref{eq:okdnewm0} is not a randomized Krylov decomposition because $U_{m}^{\new}$ is not $\Omega$-orthonormal; the vector
$\widehat{u}_{\ell+1}^{\new}$ is orthogonal but \emph{not} $\Omega$-orthogonal to $\range(U_{\ell}^{\new})$. This can be fixed by a (sketch) reorthogonalization step.

Assuming that $U_{m}^{\old}$ is $\Omega$-orthonormal, we know that $\widetilde{U}_{m}^{\new}\defi [U_{\ell}^{\new},u_{\ell+2}^{\new},\dotsc,u_{m}^{\new}]$ is $\Omega$-orthonormal. To make $U_{m}^{\new}$ $\Omega$-orthonormal, it suffices to apply one step of randomized Gram--Schmidt to the $(\ell+1)$th column of $U_{m}^{\new}$ and replace it by the vector
\begin{equation}
    \label{eq:optcorrection}
    u_{\ell+1}^{\new} = \frac{1}{\eta}\bigl(\widehat{u}_{\ell+1}^{\new}-\widetilde{U}_{m}^{\new}(\Omega \widetilde{U}_{m}^{\new})^{\dagger}\Omega\widehat{u}_{\ell+1}^{\new}\bigr) 
    =\frac{1}{\eta}\bigl(\widehat{u}_{\ell+1}^{\new}-U_{\ell}^{\new}(\Omega U_{\ell}^{\new})^{\dagger}\Omega\widehat{u}_{\ell+1}^{\new}\bigr),
\end{equation} 
where $\eta$ is a normalization scalar such that $\norm{\Omega u_{\ell+1}^{\new}}=1$.
This additional step only costs $\order(n\ell)$, which is negligible with respect to the other orthogonalization costs.

Let us remark that this step will not change the range of $U_{m}^{\new}$.
By default, we do \emph{not} enable this optional step in our implementation because we observed numerically that $U_{m}^{\new}$ remains well-conditioned even without this step.

\paragraph{Pseudocode}
The procedures described above are summarized in~\cref{algo}. 
\begin{algorithm2e}[htbp]
	\caption{Similarity-restoring randomized Krylov--Schur method} \label{algo}
	\KwIn{Matrix $A\in\C^{n\times n}$. Order $m$ of the Krylov decomposition after expanding and order $\ell<m$ after restarting. Number $k\leq \ell$ of desired eigenvalues. Sketching matrix $\Omega\in\C^{d\times n}$ with $m\leq d\ll n$.}
    \KwOut{Basis $U_{m} \widehat{V}_{k} \in \C^{n\times k}$ of subspace approximating desired invariant subspace.}
    Draw Gaussian random initial vector $b \in\C^{n}$\;
    Compute randomized Krylov decomposition $AU_{m}=U_{m}H_{m}+h_{m+1,m}u_{m+1}e_{m}^{\Htran}$ by performing $m$ iterations of the randomized Arnoldi process\;
    \While{not converged}{
        Solve linear least-squares problem to compute $\widehat{h}_{m}=U_{m}^{\dagger}u_{m+1}$\;
        Update $\widehat{u}_{m+1}=u_{m+1}-U_{m}\widehat{h}_{m}$ and $\widehat{H}_{m}=H_{m}+h_{m+1,m}\widehat{h}_{m}e_{m}^{\Htran}$\;
        Compute partial Schur decomposition $\widehat{H}_{m}\widehat{V}_{\ell}=\widehat{V}_{\ell}\widehat{S}_{\ell}$ corresponding to Ritz values\;
        Check convergence\;
        Set $U_{\ell}=U_{m}\widehat{V}_{\ell}$, $H_{\ell}=\widehat{S}_{\ell}$, $\widehat{u}_{\ell+1}=\widehat{u}_{m+1}$ and $c_{\ell}=(h_{m+1,m}e_{m}^{\Htran}\widehat{V}_{\ell})^{\Htran}$\;
        Expand $AU_{\ell}=U_{\ell}H_{\ell}+\widehat{u}_{\ell+1}c_{\ell}^{\Htran}$ to  $AU_{m}=U_{m}H_{m}+h_{m+1,m}u_{m+1}e_{m}^{\Htran}$ by performing $m-\ell$ steps of the randomized Arnoldi process\;
        (Optional)  Update $(\ell+1)$th column of $U_{m}$ by \cref{eq:optcorrection}\;\label{line:correction} 
    }  
   \Return{$U_{m}\widehat{V}_{k}$}, where $\widehat{V}_{k}$ contains Schur vectors of $\widehat{H}_{m}$ corresponding to $k$ converged eigenvalues\;
\end{algorithm2e}

\subsubsection{Equivalence to standard Krylov--Schur}

In this section, we show that \cref{algo}---in the absence of roundoff errors---produces the same Ritz values as
the standard Krylov--Schur method. Both algorithms thus converge at the same speed to eigenvalues of $A$.
When no restarting is performed, this follows directly from the similarity result established in~\cref{thm:OKD}.

To incorporate restarting, we will consider one cycle of \cref{algo}. The cycle is started with the 
order-$m$ Krylov decomposition~\cref{eq:okdold}. Let us denote the associated orthonormal Krylov decomposition (in the sense of~\cref{thm:OKD}) by
\begin{equation}
    \label{eq:aKDold}
    A\widehat{Q}_{m}^{\old} = \widehat{Q}_{m}^{\old}\widehat{G}_{m}^{\old} + \widehat{u}_{m+1}^{\old}(\widehat{c}^{\old}_{m})^{\Htran}.
\end{equation}
The cycle is completed with the order-$m$ Krylov decomposition~\cref{eq:okdnewm1}. Let us, again, denote the associated orthonormal Krylov decomposition by 
\begin{equation}
    \label{eq:aKDnew}
    A\widehat{Q}_{m}^{\new} = \widehat{Q}_{m}^{\new}\widehat{G}_{m}^{\new} + \widehat{u}_{m+1}^{\new}(\widehat{c}^{\new}_{m})^{\Htran}.
\end{equation}
On the other hand, one can also apply one cycle of the standard Krylov--Schur method \cite{stewart2002krylov}
to the orthonormal Krylov decomposition \cref{eq:aKDold}, which uses the standard Arnoldi process for expansion, to obtain another new
order-$m$ Krylov decomposition.
\Cref{thm:aKD} below shows that in both cases, the new Krylov decomposition is associated with the same (Krylov) subspace. As a consequence, their corresponding Ritz values are the same. This result is illustrated in \cref{fig:aKD}.

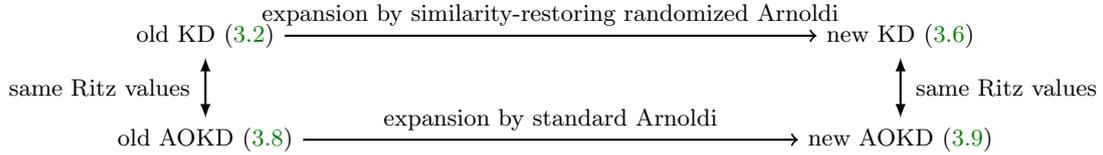
\begin{figure}[htbp]
    \centering
    \begin{tikzpicture}[
        node distance=0.8cm and 7cm,
        every node/.style={font=\small},
        every path/.style={->, thick}
    ]
        \node (a1) {old KD~\eqref{eq:okdold}};
        \node (a2) [right=of a1] {new KD~\eqref{eq:okdnewm1}};
        \node (b1) [below=of a1] {old AOKD~\eqref{eq:aKDold}};
        \node (b2) [below=of a2] {new AOKD~\eqref{eq:aKDnew}};

        
        \draw (a1) -- (a2) node[midway, above] {expansion by similarity-restoring randomized Arnoldi};
        \draw (b1) -- (b2) node[midway, above] {expansion by standard Arnoldi};
        \draw[latex-latex] (a1) -- (b1) node[midway, left, xshift=-2pt] {same Ritz values};
        \draw[latex-latex] (a2) -- (b2) node[midway, right, xshift=2pt] {same Ritz values};
    \end{tikzpicture}
    \caption{Equivalence of one cycle of \cref{algo} (using similarity-restoring randomized Arnoldi) 
    and one cycle of Krylov--Schur (using standard Arnoldi) applied to a Krylov decomposition (KD) of the form \cref{eq:OKD} and its associated orthonormal Krylov decomposition (AOKD), respectively. 
    }
    \label{fig:aKD}
\end{figure}

\begin{lemma}
    \label{thm:aKD}
    Consider the Krylov decomposition~\cref{eq:okdold} and 
    assume that the eigenvalues of the diagonal blocks $\widehat{S}_{\ell}^{\old}$ and $\widehat{S}_{\perp}^{\old}$
    in the ordered Schur decomposition~\cref{eq:Schur} are disjoint. Now, consider the 
    orthonormal Krylov decomposition~\cref{eq:aKDold} associated with~\cref{eq:okdold} and a partial Schur decomposition $\widehat{G}_{m}^{\old}\widetilde{V}_{\ell}^{\old} = \widetilde{Y}_{\ell}^{\old}\widetilde{S}_{\ell}^{\old}$
    such that the eigenvalues of $\widehat{S}_{\ell}^{\old}$ and $\widetilde{S}_{\ell}^{\old}$ are the same. Suppose that the corresponding compressed Krylov decomposition,
    \begin{equation}
        \label{eq:KDnew}
        A\widehat{Q}_{m}^{\old}\widetilde{V}_{\ell}^{\old} = \widehat{Q}_{m}^{\old}\widetilde{V}_{\ell}^{\old}\widetilde{S}_{\ell}^{\old}+\widehat{u}_{m+1}^{\old}(\widehat{c}^{\old}_{m})^{\Htran}\widetilde{V}_{\ell}^{\old},
    \end{equation}
    is expanded by performing $m-\ell\geq 1$ steps of the standard Arnoldi process, leading to the order-$m$ Krylov decomposition
    \begin{equation*}
        A\widetilde{Q}_{m}=\widetilde{Q}_{m}\widetilde{G}_{m}+\widetilde{g}_{m+1,m}\widetilde{q}_{m+1}\widetilde c_{m}^{\Htran}.
    \end{equation*}
    Then this Krylov decomposition is equivalent to the orthonormal Krylov decomposition \cref{eq:aKDnew} associated with \cref{eq:okdnewm1}, that is\footnote{The notion  of equivalence established by~\cref{thm:aKD} is slightly stronger than the notion of equivalence defined in \cite[p.~309]{Stewart2001}, which only requires $\range([\widetilde{Q}_{m},\widetilde{q}_{m+1}])=\range([\widehat{Q}_{m}^{\new},\widehat{u}_{m+1}^{\new}])$.} 
    \begin{equation} \label{eq:krylovequivalence}
        \range(\widetilde{Q}_{m}) = \range(\widehat{Q}_{m}^{\new})
        \quad\text{and}\quad 
        \spa{\widetilde{q}_{m+1}} = \spa{\widehat{u}_{m+1}^{\new}}.
    \end{equation}

\end{lemma}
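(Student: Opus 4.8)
The plan is to establish \cref{eq:krylovequivalence} by showing that the two expansion procedures --- standard Arnoldi applied to the compressed decomposition \cref{eq:KDnew}, and randomized Arnoldi applied to the algorithm's compressed decomposition \cref{eq:okdnewell} --- generate the \emph{same} Krylov subspace and the same one-dimensional residual space; I would do this in three steps. \textbf{Step 1 (the two compressions agree).} Apply \cref{thm:OKD} to \cref{eq:okdold}: with $R\defi(\widehat Q_m^{\old})^{\Htran}U_m^{\old}$ we have $U_m^{\old}=\widehat Q_m^{\old}R$, $R$ nonsingular, $\widehat G_m^{\old}=R\widehat H_m^{\old}R^{-1}$, and the residual vector in \cref{eq:aKDold} is the same $\widehat u_{m+1}^{\old}$ as in \cref{eq:okdold}, with $(\widehat Q_m^{\old})^{\Htran}\widehat u_{m+1}^{\old}=0$. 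From \cref{eq:Schur} we get $\widehat H_m^{\old}\widehat V_\ell^{\old}=\widehat V_\ell^{\old}\widehat S_\ell^{\old}$, hence $\widehat G_m^{\old}(R\widehat V_\ell^{\old})=(R\widehat V_\ell^{\old})\widehat S_\ell^{\old}$, so $\range(R\widehat V_\ell^{\old})$ is the invariant subspace of $\widehat G_m^{\old}$ for the eigenvalues of $\widehat S_\ell^{\old}$. Under the disjoint-spectra hypothesis this invariant subspace is unique, so it coincides with $\range(\widetilde V_\ell^{\old})$ from the partial Schur decomposition, and applying the injective map $\widehat Q_m^{\old}$ gives $\range(U_\ell^{\new})=\range(U_m^{\old}\widehat V_\ell^{\old})=\range(\widehat Q_m^{\old}\widetilde V_\ell^{\old})$. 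Moreover both compressions carry over the \emph{same} residual vector $\widehat u_{m+1}^{\old}$ (compare $\widehat u_{m+1}^{\old}(c_m^{\old})^{\Htran}\widehat V_\ell^{\old}$ in \cref{eq:okdnewell} with the residual of \cref{eq:KDnew}), which is orthogonal to the common compressed subspace because $\range(U_\ell^{\new})\subseteq\range(U_m^{\old})$.

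\textbf{Step 2 (Arnoldi expansion depends only on the subspace and the residual direction).} I would prove, by induction on the number of expansion steps, the following span-chasing fact: if an order-$\ell$ Krylov decomposition $AU_\ell=U_\ell H_\ell+\widehat u_{\ell+1}c_\ell^{\Htran}$ is expanded to order $m$ by $m-\ell$ steps of standard Arnoldi, or of randomized Arnoldi producing \cref{eq:okdnewm0}, then the resulting order-$m$ basis spans $\range(U_\ell)+\kryl_{m-\ell}(A,\widehat u_{\ell+1})$, and its new residual vector lies in $\range(U_\ell)+\kryl_{m-\ell+1}(A,\widehat u_{\ell+1})$ and is orthogonal to the order-$m$ subspace. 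The only structural inputs are $A\,\range(U_\ell)\subseteq\range(U_\ell)+\spa{\widehat u_{\ell+1}}$, which is exactly the Krylov decomposition, and the recursion $u_{k+1}\in\range(U_k)+\spa{Au_k}$; since the latter is shared by standard and $\Omega$-orthogonalization alike --- the only difference being which vector is selected inside each span, not the span --- the statement holds verbatim for the randomized expansion. I would also record that the transformation \cref{eq:rKd2oKD}--\cref{eq:updatearnoldi} from \cref{eq:okdnewm0} to \cref{eq:okdnewm1} leaves $\range(U_m^{\new})$ unchanged, keeps the new residual $\widehat u_{m+1}^{\new}$ inside $\range(U_\ell^{\new})+\kryl_{m-\ell+1}(A,\widehat u_{m+1}^{\old})$, and makes it orthogonal to $\range(U_m^{\new})$; and that $\widehat Q_m^{\new}$ in \cref{eq:aKDnew} is by definition an orthonormal basis of $\range(U_m^{\new})$.

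\textbf{Step 3 (conclusion and the main obstacle).} Combining Steps 1 and 2 --- and using that $\kryl_j(A,\cdot)$ is scale-invariant in its second argument, so the normalization of $\widehat u_{m+1}^{\old}$ in the standard expansion is irrelevant --- both expansions produce the $m$-dimensional subspace $\mathcal V\defi\range(U_\ell^{\new})+\kryl_{m-\ell}(A,\widehat u_{m+1}^{\old})$, whence $\range(\widetilde Q_m)=\range(\widehat Q_m^{\new})=\mathcal V$, which is the first identity in \cref{eq:krylovequivalence}. For the second, both $\widetilde q_{m+1}$ and $\widehat u_{m+1}^{\new}$ lie in the $(m+1)$-dimensional subspace $\mathcal U\defi\range(U_\ell^{\new})+\kryl_{m-\ell+1}(A,\widehat u_{m+1}^{\old})$ and are orthogonal to $\mathcal V\subset\mathcal U$; since $\mathcal U\cap\mathcal V^{\perp}$ is one-dimensional and, barring breakdown, neither vector vanishes, they are collinear. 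The main obstacle is Step 1: because $U_m^{\old}$ is not orthonormal, $R$ scrambles the Schur vectors, so $\widehat V_\ell^{\old}$ cannot be matched with $\widetilde V_\ell^{\old}$ directly and one must invoke uniqueness of the invariant subspace --- which is exactly where the disjoint-spectra hypothesis enters; a secondary point is tracking the no-breakdown assumption so that all dimension counts are exact.
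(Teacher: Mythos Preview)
Your proposal is correct and follows essentially the same approach as the paper: both use the similarity $\widehat G_m^{\old}=R\widehat H_m^{\old}R^{-1}$ from \cref{thm:OKD} together with uniqueness of the invariant subspace under the disjoint-spectra hypothesis to conclude $\range(U_\ell^{\new})=\range(\widehat Q_m^{\old}\widetilde V_\ell^{\old})$, and then identify the expanded subspace with $\range(U_\ell^{\new})+\kryl_{m-\ell}(A,\widehat u_{m+1}^{\old})$. Your dimension-counting argument for $\spa{\widetilde q_{m+1}}=\spa{\widehat u_{m+1}^{\new}}$ is a clean alternative to the paper's explicit projection formula $\widetilde q_{m+1}=\widetilde\eta^{-1}(I-\widetilde Q_m\widetilde Q_m^{\Htran})A^{m-\ell}\widehat u_{m+1}^{\old}$, but the underlying reasoning is the same.
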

\begin{proof}
By assumptions, $\range(\widetilde{V}_{\ell}^{\old})$ is an invariant subspace of
$\widehat{G}_{m}^{\old}$. From the proof of \Cref{thm:OKD}, we know that
    \begin{equation*}
        \widehat{H}_{m}^{\old} = R^{-1} \widehat{G}_{m}^{\old} R, \quad R= (\widehat{Q}_{m}^{\old})^{\Htran}U_{m}^{\old}, 
    \end{equation*}
    and thus $\range(R^{-1} \widetilde{V}_{\ell}^{\old})$ is an invariant subspace 
    of $\widehat{H}_{m}^{\old}$ belonging to the eigenvalues of
    $\widetilde{S}_{\ell}^{\old}$. But $\range(\widehat{V}_{\ell}^{\old})$
    is also an invariant subspace of $\widehat{H}_{m}^{\old}$ belonging to the same eigenvalues.
    Because $\widehat{S}_{\ell}^{\old}$ and $\widehat{S}_{\perp}^{\old}$ have disjoint eigenvalues, these invariant subspaces are, in fact, identical and, hence, there exists an invertible matrix $T$ such that
    \[
     R^{-1} \widetilde{V}_{\ell}^{\old} = \widehat{V}_{\ell}^{\old} T.
    \]
    This implies 
    \[
     \widehat{Q}_{m}^{\old}\widetilde{V}_{\ell}^{\old} =
     \widehat{Q}_{m}^{\old} R \widehat{V}_{\ell}^{\old} T = 
     \widehat{Q}_{m}^{\old}(\widehat{Q}_{m}^{\old})^{\Htran}U_{m}^{\old}\widehat{V}_{\ell}^{\old}T = U_{m}^{\old}\widehat{V}_{\ell}^{\old}T.
    \]
We therefore have  
    \begin{equation*}
        \begin{aligned}
            \range(\widetilde{Q}_{m})&=\range[\widehat{Q}_{m}^{\old}\widetilde{V}_{\ell}^{\old},\widehat{u}_{m+1}^{\old},A\widehat{u}_{m+1}^{\old},\dotsc,A^{m-\ell-1}\widehat{u}_{m+1}^{\old}] \\
            &= \range[U_{m}^{\old}\widehat{V}_{\ell}^{\old},\widehat{u}_{\ell+1}^{\new},A\widehat{u}_{\ell+1}^{\new},\dotsc,A^{m-\ell-1}\widehat{u}_{\ell+1}^{\new}]
            = \range(\widehat{Q}_{m}^{\new}),
        \end{aligned}
    \end{equation*}
    where we used that expansion adds a Krylov subspace (using the last vector from the previous cycle as initial vector) and $\widehat{u}_{\ell+1}^{\new} = \widehat{u}_{m+1}^{\old}$.
    Moreover,
    \begin{equation*}
        \widetilde{q}_{m+1} = \frac{1}{\widetilde{\eta}} (I-\widetilde{Q}_{m}\widetilde{Q}_{m}^{\Htran})A^{m-\ell}\widehat{u}_{m+1}^{\old} = \frac{1}{\widetilde{\eta}}\bigl(I-\widehat{Q}_{m}^{\new}(\widehat{Q}_{m}^{\new})^{\Htran}\bigr)A^{m-\ell}\widehat{u}_{\ell+1}^{\new} = \alpha \widehat{u}_{m+1}^{\new},
    \end{equation*}
    where $\widetilde{\eta}$ is a normalization scalar and $\abs{\alpha}=1$. This completes the proof.
\end{proof}

For convenience,~\cref{thm:aKD} asssumed that standard Krylov--Schur is applied to the same (associated) orthonormal Krylov  decomposition~\eqref{eq:aKDold}. It is straightforward to verify that the result of the lemma still holds when 
standard Krylov--Schur is applied to an orthonormal Krylov decomposition that is not necessarily identical but only equivalent to~\eqref{eq:aKDold}, in the sense of~\eqref{eq:krylovequivalence}. In particular, this allows us to apply~\cref{thm:aKD} recursively to each cycle of \cref{algo}. It follows that, with the same initial vector, \cref{algo} and the standard Krylov--Schur method generate the same subspace, provided that there is never ambiguity in the choice of the $\ell$  wanted Ritz values.
Recall that the eigenvalues of the compressed matrix $\widehat{H}_{m}$ produced by \cref{algo} are the Ritz values of $A$ with respect to the subspace $\range(U_{m})$. Consequently, both \cref{algo} and the standard Krylov--Schur method always produce the same Ritz values after the same number of cycles.

\subsubsection{Backward stability}

\label{subsec:condition-number-oblique}
In this section, we will assess the impact of roundoff error on the Krylov decompositions produced by~\cref{algo}. Assuming that the optional sketched reorthogonalization step is enabled, let $[\underline{U}_{m},\widehat{\underline{u}}_{m+1}]$ denote the basis \emph{computed} in floating point arithmetic in a cycle of~\cref{algo} after the correction in line~\ref{line:correction} has been performed. Consistent with existing stability results on the randomized Gram--Schmidt process in \cite[Thms.~3.2 and 3.3]{Balabanov2022}, it is not unreasonable to assume that
\begin{equation}
     \label{eq:assume1}
    (\Omega\underline{U}_{m})^{\Htran}(\Omega\underline{U}_{m})-I=\order(\mpr),
\end{equation}
where $\mathbf u$ denotes unit roundoff  ($\approx 10^{-16}$ in double precision).
Here and in the following, $\order(\cdot)$ absorbs mild constants depending only on $m$ and $n$.
We also assume that 
$\Omega$ is an $\varepsilon$-subspace embedding for $\range([\underline{U}_{m},\widehat{\underline{u}}_{m+1}])$ for some $\varepsilon > 0$ not very close to $1$. By~\cref{defn:epsilon-subspace-embedding}, this implies a well-conditioned matrix $\underline{U}_{m}$:
\begin{equation}
     \label{eq:assume2}
     \|\underline{U}_m\| \le \sqrt{1+\varepsilon}, \quad 
     \|\underline{U}_m^\dagger\| \le 1/\sqrt{1-\varepsilon} \quad \Rightarrow \quad 
     \kappa(\underline{U}_{m}) \le \kappa_{\varepsilon} := \sqrt{1+\varepsilon} / \sqrt{1-\varepsilon}.
\end{equation}
In turn, the correction~\eqref{eqn:correction-least-squares-problem} will be computed quite accurately and we may assume that
\begin{equation}
      \label{eq:assume3}
      \norm{\widehat{\underline{u}}_{m+1}}=1+\order(\mpr)
      \quad\text{and}\quad 
      \underline{U}_{m}^{\dagger}\underline{\widehat{u}}_{m+1}=\order(\mpr/\sqrt{1-\varepsilon}).
\end{equation}
Applying the correction and performing matrix-vector products with $A$ will incur further error, which we collect in a residual term $R$ of a perturbed Krylov decomposition
\begin{equation}
      \label{eq:assume4}
 A\underline{U}_{m} = \underline{U}_{m}\underline{\widehat{H}}_{m} +\underline{\widehat{u}}_{m+1}\underline{c}_{m}^{\Htran} + R, \quad 
 R = \order(\kappa_{\varepsilon}\sqrt{1+\varepsilon}\|A\| \mpr), \quad \underline{c}_{m} = \order(\sqrt{1+\varepsilon}\|A\|),
\end{equation}
similar to the roundoff error model considered for the standard Krylov--Schur method in~\cite[Eq~(4.1)]{stewart2002krylov}. Under these assumptions, the following theorem establishes a backward stability result. 
\begin{theorem}
Suppose that the assumptions~\cref{eq:assume1,eq:assume2,eq:assume3,eq:assume4} are satisfied with $\kappa_{\varepsilon} \ll 1/\mpr$.
Then there exist a vector $\widehat{u}_{m+1}$ and a matrix $E$ with
\[
 \widehat{u}_{m+1}-\underline{\widehat{u}}_{m+1} =\order(\kappa_{\varepsilon}\mpr), \quad E = 
 \order(\norm{A}\kappa_{\varepsilon}^{2}\mpr),
\]
such that $\norm{\widehat{u}_{m+1}}=1$ and 
\begin{equation}
        \label{eq:thmbs2new}
        (A+E)\underline{U}_{m}=\underline{U}_{m}\underline{\widehat{H}}_{m}+\widehat{u}_{m+1}\underline{c}_{m}^{\Htran},
        \quad \underline{U}_{m}^{\Htran}\widehat{u}_{m+1}=0.
\end{equation}
\end{theorem}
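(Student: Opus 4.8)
The plan is to follow Stewart's backward-error analysis of standard Krylov--Schur~\cite{stewart2002krylov} and construct $E$ as a sum $E = E_1 + E_2$: the first summand absorbs the residual $R$ of the perturbed decomposition~\cref{eq:assume4}, and the second absorbs the tiny rank-one perturbation created when $\underline{\widehat{u}}_{m+1}$ is replaced by a vector that is \emph{exactly} unit-norm and orthogonal to $\underline{U}_m$. Throughout, the key structural fact is that~\cref{eq:assume2} makes $\underline{U}_m$ full column rank and well-conditioned, so $\underline{U}_m^{\dagger}\underline{U}_m = I$ and $\norm{\underline{U}_m^{\dagger}} \le 1/\sqrt{1-\varepsilon}$; this is precisely what lets one push a residual into a small backward perturbation of $A$ via the right inverse $\underline{U}_m^{\dagger}$.

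First I would set $E_1 := -R\,\underline{U}_m^{\dagger}$. Since $E_1\underline{U}_m = -R\underline{U}_m^{\dagger}\underline{U}_m = -R$, adding $E_1$ to $A$ in~\cref{eq:assume4} exactly cancels $R$, giving $(A+E_1)\underline{U}_m = \underline{U}_m\underline{\widehat{H}}_m + \underline{\widehat{u}}_{m+1}\underline{c}_m^{\Htran}$; and from $R = \order(\kappa_\varepsilon\sqrt{1+\varepsilon}\norm{A}\mpr)$ together with $\norm{\underline{U}_m^{\dagger}}\le 1/\sqrt{1-\varepsilon}$ we get $\norm{E_1} = \order(\norm{A}\kappa_\varepsilon^2\mpr)$. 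Next I would define $\widehat{u}_{m+1}$ by orthogonally projecting off $\range(\underline{U}_m)$ and normalizing: let $v := \underline{\widehat{u}}_{m+1} - \underline{U}_m\underline{U}_m^{\dagger}\underline{\widehat{u}}_{m+1}$ and $\widehat{u}_{m+1} := v/\norm{v}$, so that $\norm{\widehat{u}_{m+1}} = 1$ and $\underline{U}_m^{\Htran}\widehat{u}_{m+1} = 0$ hold by construction. Using~\cref{eq:assume3} and~\cref{eq:assume2}, $\norm{\underline{U}_m\underline{U}_m^{\dagger}\underline{\widehat{u}}_{m+1}} \le \sqrt{1+\varepsilon}\cdot\order(\mpr/\sqrt{1-\varepsilon}) = \order(\kappa_\varepsilon\mpr)$, whence $\norm{v - \underline{\widehat{u}}_{m+1}} = \order(\kappa_\varepsilon\mpr)$ and, since $\norm{\underline{\widehat{u}}_{m+1}} = 1+\order(\mpr)$, also $\norm{v} = 1 + \order(\kappa_\varepsilon\mpr)$. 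Here the hypothesis $\kappa_\varepsilon \ll 1/\mpr$ guarantees $\norm{v}$ is bounded away from $0$, so the normalization is harmless and $\widehat{u}_{m+1} - \underline{\widehat{u}}_{m+1} = \order(\kappa_\varepsilon\mpr)$, as claimed.

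Finally I would rebalance: substituting $\underline{\widehat{u}}_{m+1} = \widehat{u}_{m+1} + (\underline{\widehat{u}}_{m+1} - \widehat{u}_{m+1})$ into the decomposition from the previous step leaves an extra term $(\underline{\widehat{u}}_{m+1}-\widehat{u}_{m+1})\underline{c}_m^{\Htran}$ of norm $\order(\kappa_\varepsilon\mpr)\cdot\order(\sqrt{1+\varepsilon}\norm{A}) = \order(\norm{A}\kappa_\varepsilon^2\mpr)$ by~\cref{eq:assume3,eq:assume4}. Setting $E_2 := -(\underline{\widehat{u}}_{m+1}-\widehat{u}_{m+1})\underline{c}_m^{\Htran}\underline{U}_m^{\dagger}$, which satisfies $E_2\underline{U}_m = (\underline{\widehat{u}}_{m+1}-\widehat{u}_{m+1})\underline{c}_m^{\Htran}$ and $\norm{E_2} = \order(\norm{A}\kappa_\varepsilon^2\mpr)$, and taking $E := E_1 + E_2$ yields exactly~\cref{eq:thmbs2new}. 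I do not expect a genuinely hard step here: the whole argument is careful bookkeeping of the factors $\sqrt{1\pm\varepsilon}$ and $\kappa_\varepsilon$, and the only place where anything subtle enters is controlling the normalization of $v$ in the second step, which is where $\kappa_\varepsilon \ll 1/\mpr$ is needed to keep all $\order(\kappa_\varepsilon\mpr)$ contributions first order. The one point worth emphasizing over the analysis of standard Krylov--Schur is that the roles of "orthonormal basis" there are here played by the $\Omega$-orthonormal (hence merely well-conditioned) basis $\underline{U}_m$, which is why $\kappa_\varepsilon^2$ rather than a constant appears in the bound on $E$.
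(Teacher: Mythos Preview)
Your proof is correct and follows essentially the same construction as the paper: the paper defines $\widehat{u}_{m+1}$ exactly as you do (project $\underline{\widehat{u}}_{m+1}$ off $\range(\underline{U}_m)$ and normalize) and then sets $E = \big(-R - \underline{\widehat{u}}_{m+1}\underline{c}_m^{\Htran} + \widehat{u}_{m+1}\underline{c}_m^{\Htran}\big)\underline{U}_m^{\dagger}$, which is precisely your $E_1+E_2$. One small slip to fix: with your definition $E_2 := -(\underline{\widehat{u}}_{m+1}-\widehat{u}_{m+1})\underline{c}_m^{\Htran}\underline{U}_m^{\dagger}$ one gets $E_2\underline{U}_m = -(\underline{\widehat{u}}_{m+1}-\widehat{u}_{m+1})\underline{c}_m^{\Htran}$ (not the positive sign you wrote), which is indeed what is needed to cancel the leftover term.
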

\begin{proof}
    Setting
    \[
     \widehat{u}_{m+1} = \frac{(I-\underline{U}_{m}\underline{U}_{m}^{\dagger})\underline{\widehat{u}}_{m+1}}{\norm{(I-\underline{U}_{m}\underline{U}_{m}^{\dagger})\underline{\widehat{u}}_{m+1}}}
    \]
    implies $\underline{U}_{m}^{\Htran}\widehat{u}_{m+1}=0$. Moreover, assumption~\eqref{eq:assume3}
    yields $\widehat{u}_{m+1} - \underline{\widehat{u}}_{m+1} = \order(\kappa_{\varepsilon}\mpr)$. Setting
    \[
     E = \big( -R - \underline{\widehat{u}}_{m+1}\underline{c}_{m}^{\Htran} + \widehat{u}_{m+1}\underline{c}_{m}^{\Htran} \big) \underline{U}_{m}^\dagger,
    \]
    one directly verifies that~\eqref{eq:thmbs2new} holds. Moreover, 
    \[
     \|E\| \le \|\underline{U}_{m}^\dagger\| \big( \|R\| + \|\widehat{u}_{m+1} - \underline{\widehat{u}}_{m+1}\| \|\underline{c}_m\|\big) = \order( \norm{A}\kappa_{\varepsilon}^{2}\mpr),
    \]
where we used assumptions~\cref{eq:assume2,eq:assume4}.
\end{proof}

Combined with \Cref{thm:OKD}, the backward error result~\eqref{eq:thmbs2new} implies, among others, that the eigenvalues of $\underline{\widehat{H}}_{m}$ are Ritz values of a slightly perturbed matrix $A$.

\section{Application to matrix functions}
\label{sec:matrix-functions}

Given $A \in \mathbb C^{n\times n}$ and $b\in \C^n$, 
this section aims at approximating $f(A)b$ by means of the similarity-restoring randomized Arnoldi process from \cref{subsec:newrAD}. We assume that $f: \region \to \mathbb C$ is defined on a domain $\region$ that contains the numerical range $W(A)$ of $A$ and refer to~\cite{Higham08} for the definition of $f(A)$.

\subsection{Arnoldi and randomized Arnoldi for matrix functions}

We start by recalling existing (randomized) Arnoldi methods for $f(A) b$. 
Given an Arnoldi decomposition~\cref{eq:AD} with an orthonormal basis $Q_m$, the Arnoldi method for matrix functions~\cite{DruskinKnizhnerman89,Saad92} returns the approximation:
\begin{equation}
	\label{eqn:fAq-arnoldi-orth}
	f_m \defi Q_m f(G_m) \norm{b} e_1, \quad \text{where} \quad Q_m^\Htran Q_m = I \quad \text{and} \quad G_m = Q_m^\Htran A Q_m.
\end{equation}
Note that $f(G_m)$ is well defined because $W(G_{m}) \subset W(A)$.
In \cref{eqn:fAq-arnoldi-orth} and in the following, we assume that $f(G_m)$ and the other matrix functions that appear are well-defined.
Given any basis $W_m$ of $\kryl_m(A, b)$, not necessarily orthonormal, we can alternatively write $f_m$ as 
\begin{equation}
	\label{eqn:fAq-arnoldi-general}
	f_m = W_m f(W_m^\dagger A W_m) W_m^\dagger b.
\end{equation}
If the first column of $W_m$ is $\alpha b$ for some scalar $\alpha \in \C$, this expression simplifies to $f_m = W_m f(W_m^\dagger A W_m) \alpha^{-1} e_1$. 

In \cite{GuettelSchweitzer23,CKN24,PSS25}, the randomized Krylov decomposition \cref{eq:rAD} with $u_1 = b / \norm{\Omega b}$ is used to return the following approximation of $f(A) b$:
\begin{equation}
	\label{eqn:fAq-arnoldi-rand}
	f_m^\Omega \defi U_m f(H_m) \norm{\Omega b} e_1, \quad \text{where} \quad (\Omega U_m)^\Htran \Omega U_m = I \quad \text{and} \quad H_m = (\Omega U_m)^\Htran \Omega A U_m.
\end{equation}
This approximation is called \emph{sketched FOM approximation} in \cite{PSS25}. 
In this case, given any basis $W_m$ of $\kryl_m(A, b)$, not necessarily $\Omega$-orthonormal, we can rewrite~\cref{eqn:fAq-arnoldi-rand} as
\begin{equation*}
	f_m^\Omega \defi W_m f((\Omega W_m)^\dagger \Omega A W_m) (\Omega W_m)^\dagger \Omega b,
\end{equation*} 
see, for instance, \cite[Eq.~(9)]{PSS25}. 
An explicit expression for the difference between $f_m$ and $f_m^\Omega$ has been given in \cite[Cor.~5.1]{PSS25}, and it has been used to derive error bounds such as, for example, \cite[Cor.~5.4]{PSS25}. Other theoretical results on the convergence of the approximation $f_m^\Omega$ can be found in \cite[Sec.~4]{CKN24} and \cite[Sec.~2.3]{GuettelSchweitzer23}. 
Despite these results in the recent literature, the current theoretical understanding of the convergence of the approximation \cref{eqn:fAq-arnoldi-rand} is not completely satisfactory. One serious obstacle is that the numerical range of the projected matrix $H_m$ in~\cref{eqn:fAq-arnoldi-rand} may be significantly larger than the numerical range of $G_m$ in \cref{eqn:fAq-arnoldi-orth} and may, in fact, not be contained in $W(A)$. In an extreme scenario, this might mean that $f(H_m)$ is not well-defined for a function $f$ with singularities outside $W(A)$. More importantly, it leads to spikes in the convergence of the randomized Arnoldi approximation \cref{eqn:fAq-arnoldi-rand}. This kind of behavior has been observed, for example, in \cite{GuettelSchweitzer23}; see also \cref{subsubsec:experiments-matfun-convergence}.

\subsection{Restoring similarity in randomized Arnoldi for matrix functions}
\label{subsec:corrected-arnoldi-matrix-functions}

Let us now consider a Krylov decomposition of the form~\cref{eq:OKD}, additionally assuming that $U_m$ is $\Omega$-orthonormal and $c_m = h_{m+1,m}e_m$:
\begin{equation*}
	A U_m = U_m \widehat{H}_m + h_{m+1, m}\widehat{u}_{m+1} e_m^\Htran, \quad\text{and}\quad (\Omega U_m)^\Htran \Omega U_m = I, \quad U_m^\Htran \widehat{u}_{m+1} = 0.
\end{equation*}
Then
\begin{equation*}
	U_m^\dagger A U_m = \widehat{H}_m + h_{m+1,m} U_m^\dagger \widehat{u}_{m+1} e_m^\Htran = \widehat{H}_m.
\end{equation*}
Using $u_1 = b / \norm{\Omega b}$ and~\cref{eqn:fAq-arnoldi-general} with $W_m = U_m$ gives us the following equivalent expression:
\begin{equation}
	\label{eqn:fAq-arnoldi-corrected}
	f_m = U_m f(\widehat{H}_m) \norm{\Omega b} e_1.
\end{equation}
Let us stress that this approximation is identical to the standard Arnoldi approximation~\eqref{eqn:fAq-arnoldi-orth} and, thus, classical convergence results from the literature, such as~\cite{Beckermann2009,Hochbruck1997}, apply.

The expression~\cref{eqn:fAq-arnoldi-corrected} allows us to efficiently compute the Arnoldi approximation \cref{eqn:fAq-arnoldi-orth} even if the basis $U_m$ is not orthonormal, we thus benefit from the more robust convergence behavior of~\cref{eqn:fAq-arnoldi-orth} at a minor increase in computational cost relative to~\cref{eqn:fAq-arnoldi-rand}.
If the accuracy of $f_m$ is not satisfactory and we need to perform additional Arnoldi iterations, we simply discard the corrected quantities $\widehat{H}_m$ and $\widehat{u}_{m+1}$ and construct an expanded randomized Krylov decomposition~\cref{eq:rAD}, ensuring that the basis is still $\Omega$-orthonormal until the next time that we apply the similarity-restoring correction in \cref{eq:OKD} to compute an approximation to $f(A)b$. 
Since the cost of solving the linear least-squares problem \cref{eqn:correction-least-squares-problem} can quickly add up, we recommend to compute the approximation to $f(A)b$ using~\cref{eqn:fAq-arnoldi-corrected} only once every few iterations. The resulting algorithm is summarized in \cref{algo:matfun}, where the approximation~\cref{eqn:fAq-arnoldi-corrected} to $f(A) b$ is computed once every $\ell$ iterations.

\begin{remark} 
	\label{rem:matfun-comparison-with-ckn}
	The expression \cref{eqn:fAq-arnoldi-corrected} for the approximation of $f(A) b$ has already been considered in \cite{CKN24}. In particular, if the least-squares problem at line 4 of \cref{algo:matfun} is solved with LSQR with tolerance $10^{-6}$, then \cref{algo:matfun} becomes identical to \cite[Algo.~3.1]{CKN24}. 
	When this least-squares problem is solved inexactly, the difference between $f_m$ and the computed approximation can be bounded with \cite[Lem.~4.2]{CKN24}.
\end{remark}

\begin{algorithm2e}[t]
	\caption{Similarity-restoring randomized Arnoldi for matrix functions} \label{algo:matfun}
	\KwIn{Matrix $A\in\C^{n\times n}$, vector $b \in \C^n$, function $f$, sketching matrix $\Omega\in\C^{d\times n}$ with $M\leq d\ll n$, maximum number of Arnoldi iterations $M$, integer $\ell \ge 1$.}
    \KwOut{Approximation $f_m \approx f(A) b$ from \cref{eqn:fAq-arnoldi-corrected} for some $m \le M$.}
	Set $m = \ell$\; 
    Compute randomized Krylov decomposition $AU_{m}=U_{m}H_{m}+h_{m+1,m}u_{m+1}e_{m}^{\Htran}$ by performing $m$ iterations of the randomized Arnoldi process with initial vector $b$\;
    \While{$m \le M$}{
        Solve linear least-squares problem to compute $\widehat{h}_{m}=U_{m}^{\dagger}u_{m+1}$\;
        Update $\widehat{u}_{m+1}=u_{m+1}-U_{m}\widehat{h}_{m}$ and $\widehat{H}_{m}=H_{m}+h_{m+1,m}\widehat{h}_{m}e_{m}^{\Htran}$\;
		Compute the approximation $f_m = U_m f(\widehat{H}_m) \norm{\Omega b} e_1$\;
		\If{converged}{
			\Return $f_m$\;
		}
		Perform $\ell$ iterations of randomized Arnoldi to expand randomized Krylov decomposition to $AU_{m+\ell}=U_{m+\ell}H_{m+\ell}+h_{m+\ell+1,m+\ell}u_{m+\ell+1}e_{m+\ell}^{\Htran}$\;
		Set $m = m + \ell$\;  
    }  
\end{algorithm2e}

\section{Numerical experiments}
\label{sec:numerical-experiments}

In this section, we compare our similarity-restoring algorithms numerically against standard and randomized algorithms from the literature for the computation of eigenvalues and matrix functions. 
All numerical experiments in this section have been implemented in Matlab 2022b and were carried out on an AMD Ryzen~9 6900HX Processor (8 cores, 3.3--4.9 GHz) and 32 GB of RAM. Scripts to reproduce numerical results are publicly available at \url{https://github.com/nShao678/Similarity-restoring-randomized-Arnoldi-code}.

The randomized Gram--Schmidt process can be implemented in several different ways; see, for example, \cite{Balabanov2022,BalabanovGrigori25block}. In its $k$th iteration, let us denote by $U_k$ an $\Omega$-orthonormal basis and by $w$ the new vector that needs to be $\Omega$-orthonormalized.
In our experiments, we use the following two variants for this calculation.
\begin{description}
\label{des:RGS}
 \item[RGS:] Use a QR decomposition of $\Omega U_k$ to compute $\widetilde{u}_{k+1} = w - U_k (\Omega U_k)^\dagger \Omega w$. This approach corresponds, for instance, to the \emph{backslash} operator in MATLAB.
 \item[RCGS2:] Apply a randomized classical Gram--Schmidt process twice: first compute $\widetilde{w} = w - U_k (\Omega U_k)^\Htran \Omega w$ and then compute $\widetilde{u}_{k+1}$ by performing an $\Omega$-reorthogonalization, as $\widetilde{u}_{k+1} = \widetilde{w} - U_k (\Omega U_k)^\Htran \Omega \widetilde{w}$.
\end{description}
In both implementations, the new basis vector $u_{k+1}$ is obtained with the $\Omega$-normalization $u_{k+1} = \widetilde{u}_{k+1} / \norm{\Omega \widetilde{u}_{k+1}}$.

\subsection{Eigenvalue problems}
\label{subsec:numexp-eigenvalue-problems}

In this section, we compare \cref{algo} (SRR-KS) with the standard Krylov--Schur method (KS) and the randomized Krylov--Schur method (RKS).

Given a matrix $A\in\C^{n\times n}$, we compute $k$ extremal eigenvalues. We perform a restarting when the dimension of the Krylov subspace reaches $m = 4k$, and we compress the Krylov decomposition to order $\ell = 2k$, except in \cref{sec:expdMax}. A sparse sign sketching matrix $\Omega \in \R^{d\times n}$ is applied to the randomized Krylov--Schur method and \cref{algo}. The initial vector is always a Gaussian random vector and the stopping criterion is when the residual norms are below $10^{-7}$. No deflation is performed.
In the standard Krylov--Schur method, the basis vectors are orthogonalized by performing classical Gram--Schmidt with reorthogonalization (CGS2), while in randomized Krylov--Schur and \cref{algo} the vectors are $\Omega$-orthogonalized with the RGS implementation of the randomized Gram--Schmidt method \cref{eq:randgs}.

For \cref{algo}, we do not perform the optional sketched reorthogonalization step described in \cref{eq:optcorrection} in any of the experiments.
Even without this step, we do not observe any stability issues, and the bases $U_{m}$ remain consistently well conditioned in all tests.

\subsubsection{Comparison on execution time}
\label{subsubsec:numexp-eig-time}

Inspired by the test matrices used in \cite[Sec.~5.1]{de2025randomized}, we consider the following matrix:
\begin{equation}
    \label{eq:defAn}
    A_{n} = F_{n}^{-1}\begin{bmatrix}
        f(a_{1}) & g_{1} &&&&\\ 
        &f(a_{2}) & g_{2} &&&\\ 
        &&\ddots &\ddots &&\\ 
        &&&f(a_{n-1}) &g_{n-1}\\ 
        &&&&f(a_{n})
    \end{bmatrix}F_{n},
\end{equation}
where $F_{n}$ is the discrete Fourier transformation, $g_{i}$ are i.i.d.~standard Gaussian random variables for non-Hermitian examples and $0$ for Hermitian examples, and $a_{i}$ are equispaced points from $2$ to $10$. 
We compute the $k=10$ eigenvalues of $A_n$ with largest magnitude.
Consider the following four functions $f$:
\begin{equation}
    \label{eq:deffi}
    f_{1}(a) = \exp(a/10),\quad f_{2}(a) = \log(a+1),\quad f_{3}(a) = 1+\frac{1}{a^{2}},\quad f_{4}(a) = 0.99^{a}.
\end{equation}
We set $d=100$ and range $n$ in $\{10000,20000,40000,80000,160000\}$.
The execution times are collected in \cref{fig:time}, showing that both RKS and SRR-KS only take about half the time of KS, and the difference in execution time of RKS and SRR-KS is very small.
The least-squares problem \cref{eqn:correction-least-squares-problem} is solved with either Cholesky or LSQR with tolerance $10^{-12}$. In this test, LSQR is always slightly more expensive than Cholesky, since the matrix $U_m$ has a relatively small number of columns.

\begin{figure}[t]
    \centering
    \subfloat[Non-Hermitian matrices]{
    \includegraphics[width = \figsizeQ]{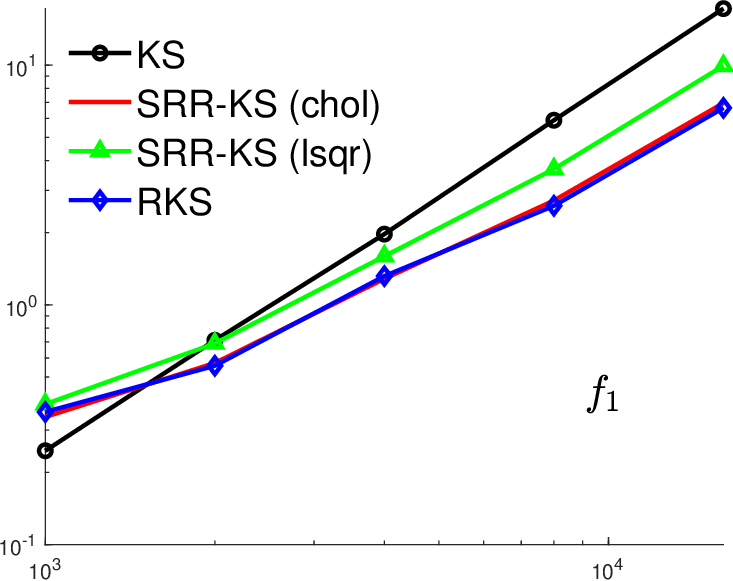}
    \includegraphics[width = \figsizeQ]{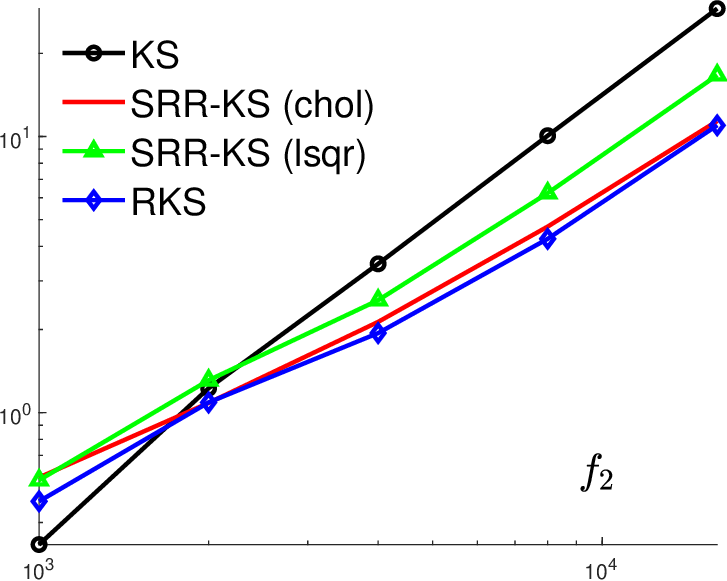}
    \includegraphics[width = \figsizeQ]{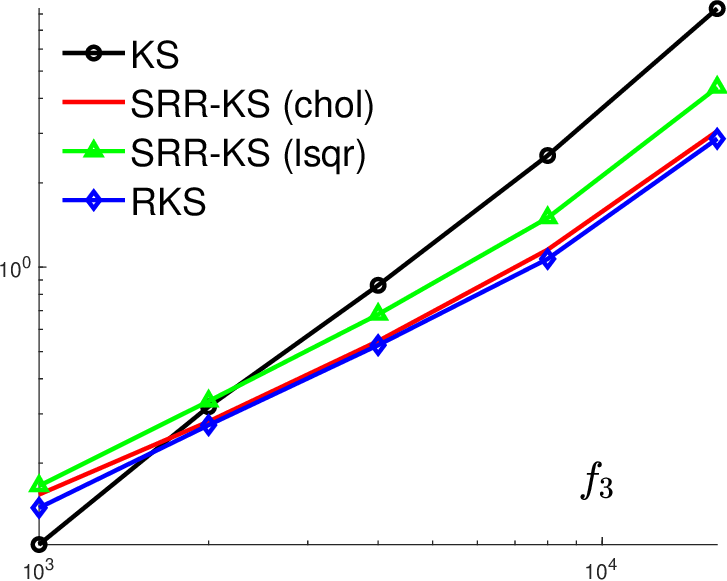}
    \includegraphics[width = \figsizeQ]{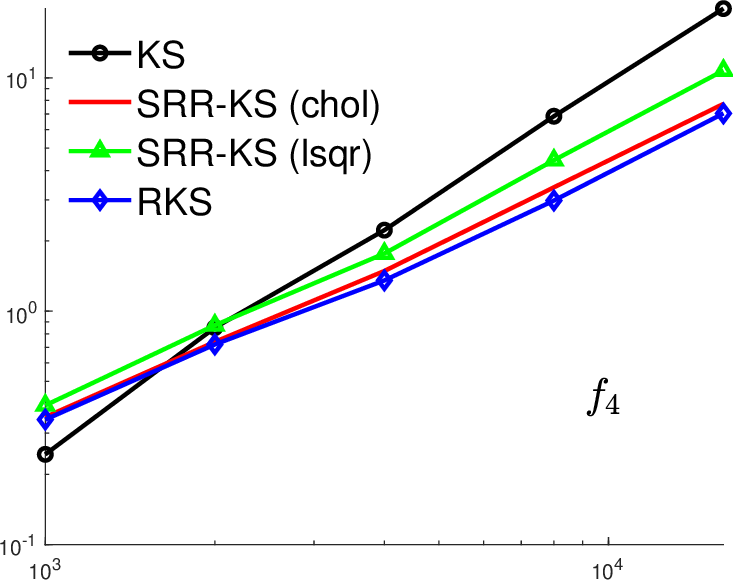}
    }

    \subfloat[Hermitian matrices]{
    \includegraphics[width = \figsizeQ]{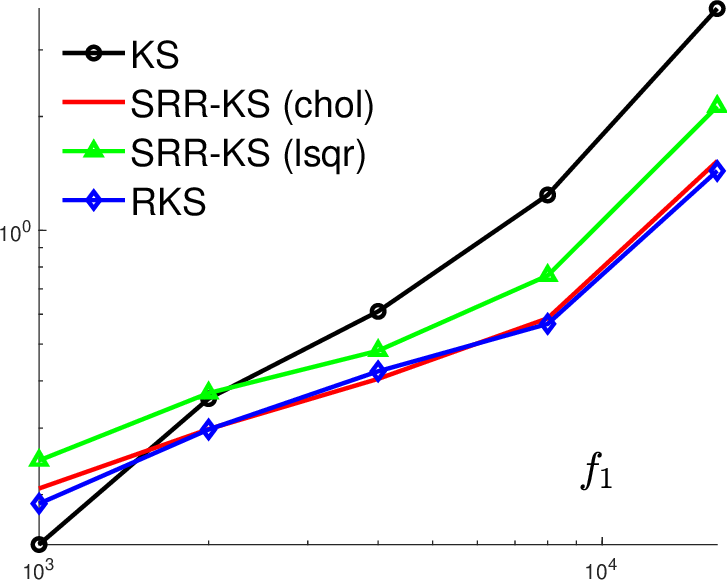}
    \includegraphics[width = \figsizeQ]{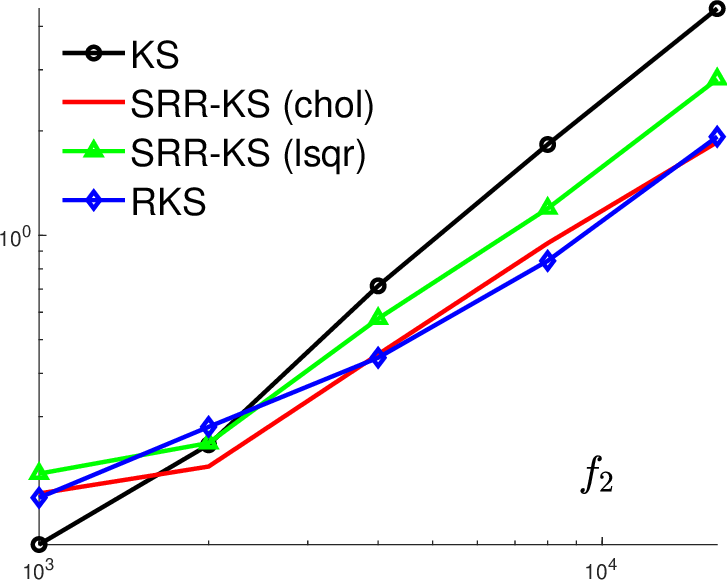}
    \includegraphics[width = \figsizeQ]{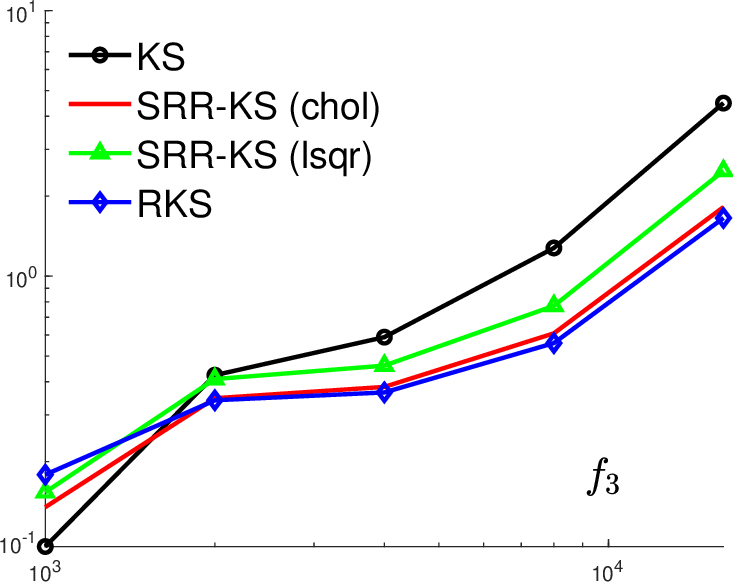}
    \includegraphics[width = \figsizeQ]{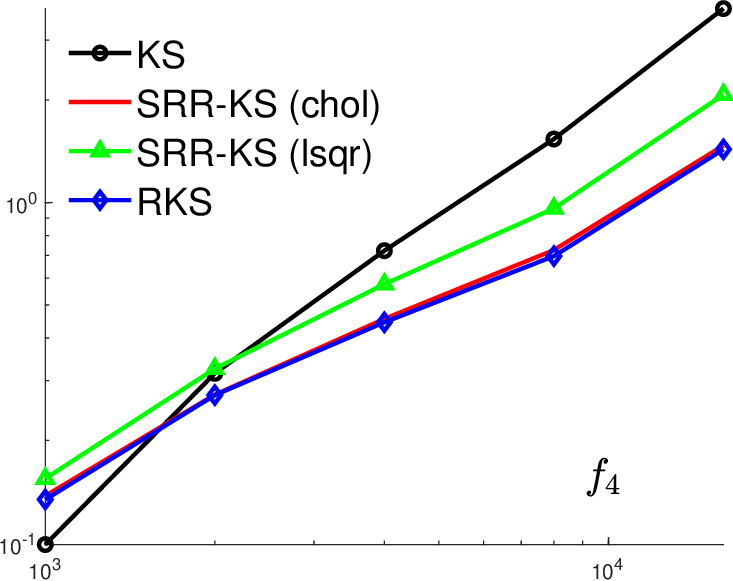}}
    \caption{Execution time (s) of Krylov--Schur (KS), \cref{algo} (SRR-KS) with Cholesky and LSQR inner least-squares solvers, and randomized Krylov--Schur (RKS) versus matrix sizes.
    From left to right, the eigenvalues are of type $f_{1}$ to $f_{4}$ in \cref{eq:deffi} for the non-Hermitian (top) and Hermitian (bottom) example matrix~\eqref{eq:defAn}. 
	\label{fig:time}}
    
\end{figure}

\subsubsection{Comparison with different $m$ and $\ell$}
\label{sec:expdMax}
The computational cost of Krylov--Schur methods mainly comes from two parts: applying the matrix $A$ to a vector and (re)orthogonalization. In general, using a higher order Krylov decomposition will reduce the number of matvecs with $A$ but increase the cost for orthogonalization. This trade-off makes the maximum dimension of Krylov subspace an important, but also sensitive parameter in the Krylov--Schur method.
As shown in \cite{de2025randomized,de2024randomized}, this parameter becomes less sensitive in RKS because the cost for orthogonalization is significantly reduced by the randomized Gram--Schmidt process. The following experiment shows that \cref{algo} also has this property.

Consider the matrix $A_{n}$ in \cref{eq:defAn}, where $n=40010$.
For $10000(k-1) <i \leq 10000k $ with $1\leq k\leq 4$, we pick $f(a_{i})$ as Gaussian random variables with distribution $\mathcal{N}(10^{k},10^{k-1})$, where the last $10$ eigenvalues are from $\mathcal{N}(0,1)$. Those eigenvalues forms $5$ clusters, and we compute the last cluster containing $k=10$ eigenvalues with smallest real parts. Denote the order of the Krylov decomposition before and after restarting by $m$ and $\ell$ respectively. We range $\ell$ in $\{10,15,20,25\}$ and $m$ in $\{30,40,50,60\}$, and use dimension $d=100$ for the sparse sign sketching. For \cref{algo}, in light of the observations in \cref{subsubsec:numexp-eig-time} we always solve the least-squares problem \cref{eqn:correction-least-squares-problem} with a Cholesky decomposition.

\begin{figure}[t]
    \centering
        \includegraphics{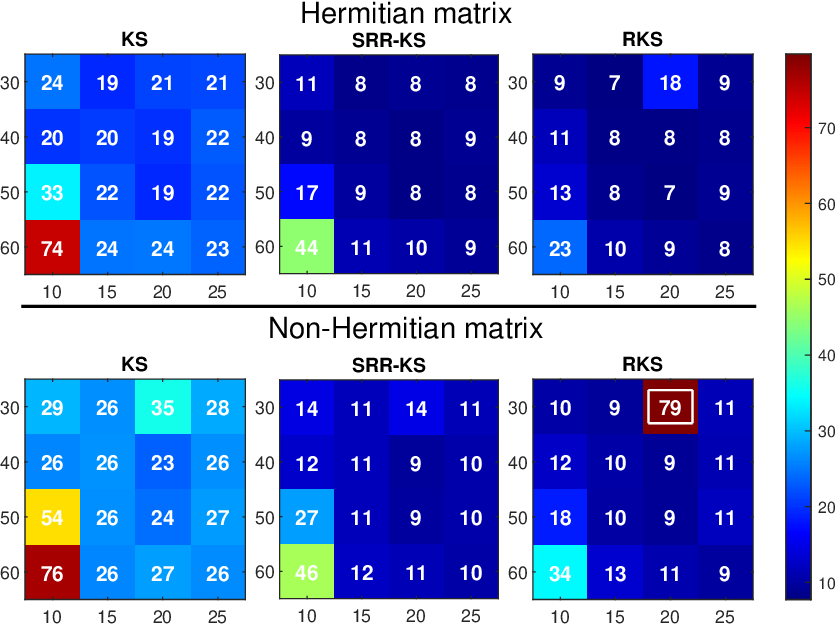}
        \caption{Execution time (s) for KS (left), SRR-KS (middle) and RKS (right) on Hermitian (top) and non-Hermitian (bottom) problems. The X-label and Y-label are $\ell$ and $m$, the dimension of Krylov subspaces before and after restarting, respectively.
        The stopping criterion is when the residual norms are below $10^{-7}$.
        The RKS method applied to the non-Hermitian problem with $\ell=20$ and $m=30$ does not converge after 10000 matvecs. 
	\label{fig:dMax}}
\end{figure}

The execution times for all three methods are collected in \cref{fig:dMax}, showing that KS takes significantly more time than SRR-KS and RKS. Moreover, when computing $k=10$ eigenvalues, selecting $\ell=k=10$ is not effective, especially when we can pick a slightly larger $m$. The recommended parameters are $m=2\ell=4k$. For RKS and SRR-KS, since their execution times for one cycle are close (SRR-KS is slightly slower than RKS due to the least-squares correction), the execution time also reflects the number of cycles. Except for some extreme choices such as $\ell=10$ and $m=60$, both SRR-KS and RKS are slightly more robust than KS with respect to the choice of the parameters $m$ and $\ell$.

When $\ell=k=10$, RKS sometimes needs fewer restart cycles than SRR-KS. But for larger $\ell$, such as the recommended choice $\ell=2k=20$, RKS may require (significantly) more cycles for convergence than KS and SRR-KS for both Hermitian and non-Hermitian problems. To further illustrate this phenomenon, we plot the convergence histories for $\ell = 20$ and $m = 30$ in \cref{fig:hist}. For both Hermitian and non-Hermitian cases, the residual of SRR-KS behaves similarly to that of KS, but the behavior of RKS is bad. Although RKS initially follows a pattern similar to KS, it soon begins to deteriorate, leading to poor convergence in both cases.

\begin{figure}[t]
    \centering
        \includegraphics[width=\figsizeD]{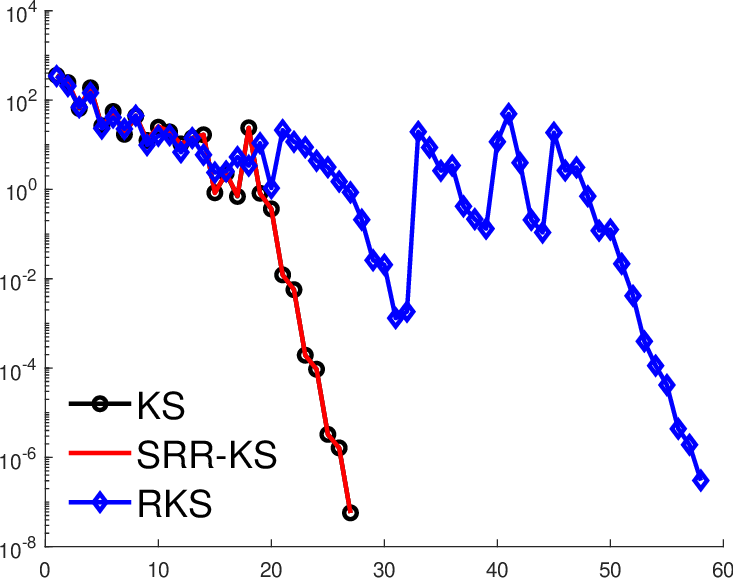}
        \includegraphics[width=\figsizeD]{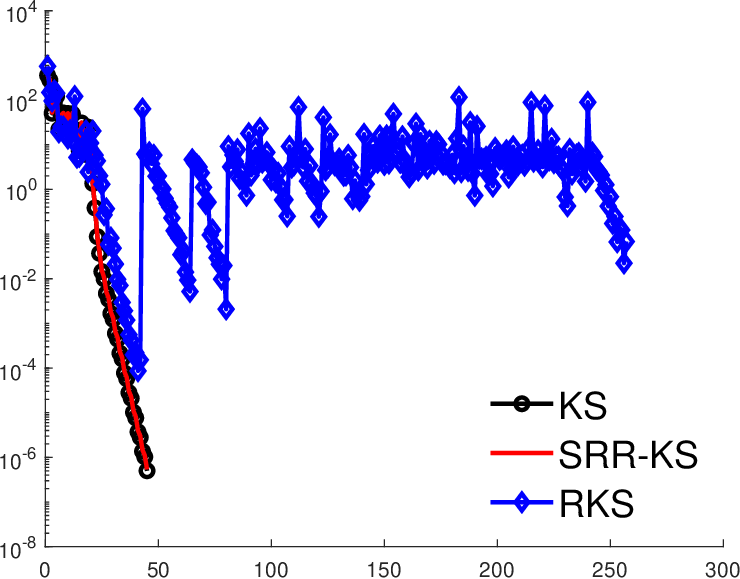}
        \caption{Convergence history of residual for KS,  SRR-KS and RKS on Hermitian (left) and non-Hermitian (right) problems with $\ell=20$ and $m=30$. The history of KS and SRR-KS are overlapped.
	\label{fig:hist}}
\end{figure}

\subsection{Matrix functions}
\label{subsec:numerical-experiments-matfun}

In this section, we compare \cref{algo:matfun} (denoted by SRR-Arnoldi) with the Arnoldi approximation \cref{eqn:fAq-arnoldi-orth} and the randomized Arnoldi approximation \cref{eqn:fAq-arnoldi-rand} 
for $f(A)b$ in terms of convergence rate and execution time.
The orthogonalization process in the Arnoldi approximation \cref{eqn:fAq-arnoldi-orth} is based on CGS2. 
For the randomized Gram--Schmidt process in the randomized Arnoldi approximation \cref{eqn:correction-least-squares-problem} and SRR-Arnoldi, we employ the RCGS2 implementation described at the beginning of \cref{sec:numerical-experiments}.

\subsubsection{Convergence for symmetric matrices}
\label{subsubsec:experiments-matfun-convergence}

We consider a symmetric test matrix $A$ inspired by the matrix \texttt{G-clust5-s25} used in \cite{TGB23}. Letting $n = 10000$, we define $A = Q^{\Ttran} D Q \in \R^{n \times n}$, where $Q$ is the (orthogonal) discrete cosine transform matrix, and $D$ is a diagonal matrix whose diagonal entries $d_i$ are defined as follows: for $1 \le k \le 4$, the diagonal entries $d_i$ with $1 + 2500(k-1)\le i \le 2500k$ are given by independent Gaussian random variables with distribution $\mathcal{N}(10^{k-1}, 10^{k-2})$. In other words, the eigenvalues of $A$ are grouped into four clusters of equal size, centered at $\{1,10,100,1000\}$. This kind of eigenvalue distribution gives rise to frequent spikes in the convergence history of randomized FOM \cite[Fig.~4]{TGB23} for linear systems, so we expect to observe a similar behavior also for the approximation of $f(A) b$. For this test, we do not exploit the symmetry of $A$ in the standard Arnoldi approximation, and we use full orthogonalization to generate the basis $Q_m$.

We approximate $f(A) b$, where $b$ is a Gaussian random vector, and $f(z)$ is either $z^{1/2}$, $z^{-1/2}$ or $\log z$.  
We use $m = 300$ Arnoldi iterations for all methods and a sketching size $d = 2m$, and compare three different approaches to solving the least-squares problem \cref{eqn:correction-least-squares-problem} in SRR-Arnoldi: Cholesky and LSQR with tolerances $10^{-12}$ and $10^{-1}$. The convergence history for the different approximations is shown in \cref{fig:fAb-comparison}, together with the ratios of the errors of each method with respect to the standard Arnoldi approximation, for a better visualization of the spikes. When \cref{eqn:correction-least-squares-problem} is solved with Cholesky or LSQR with tolerance $10^{-12}$, the error curve for SRR-Arnoldi  completely overlaps with the standard Arnoldi approximation, as expected. 
In this test, the randomized Arnoldi approximation \cref{eqn:fAq-arnoldi-rand} exhibits several spikes in its error, which decrease its robustness. Even if we use a very loose LSQR tolerance of $10^{-1}$ for solving \cref{eqn:correction-least-squares-problem} in SRR-Arnoldi, we observe that the magnitude of the spikes is significantly reduced with respect to randomized Arnoldi. 

Although SRR-Arnoldi was already compared against other deterministic and randomized methods in \cite[Sec.~5]{CKN24}, the experiments in this section allow us to attain a more comprehensive understanding of the convergence behavior and highlight the robustness issues of the randomized Arnoldi approximation \cref{eqn:fAq-arnoldi-rand}, explicitly showing that SRR-Arnoldi is able to correct them. In addition, while \cite{CKN24} always employs LSQR with tolerance $10^{-6}$ to solve \cref{eqn:correction-least-squares-problem}, by comparing the behavior of different methods for the solution of the least-squares problem we observe that even a very cheap solver with a loose tolerance (such as LSQR with tolerance $10^{-1}$) is able to strongly mitigate the magnitude of the spikes in the convergence of the randomized Arnoldi approximation.

\begin{figure}[t]
    \centering
    \includegraphics[width = \figsizeT]{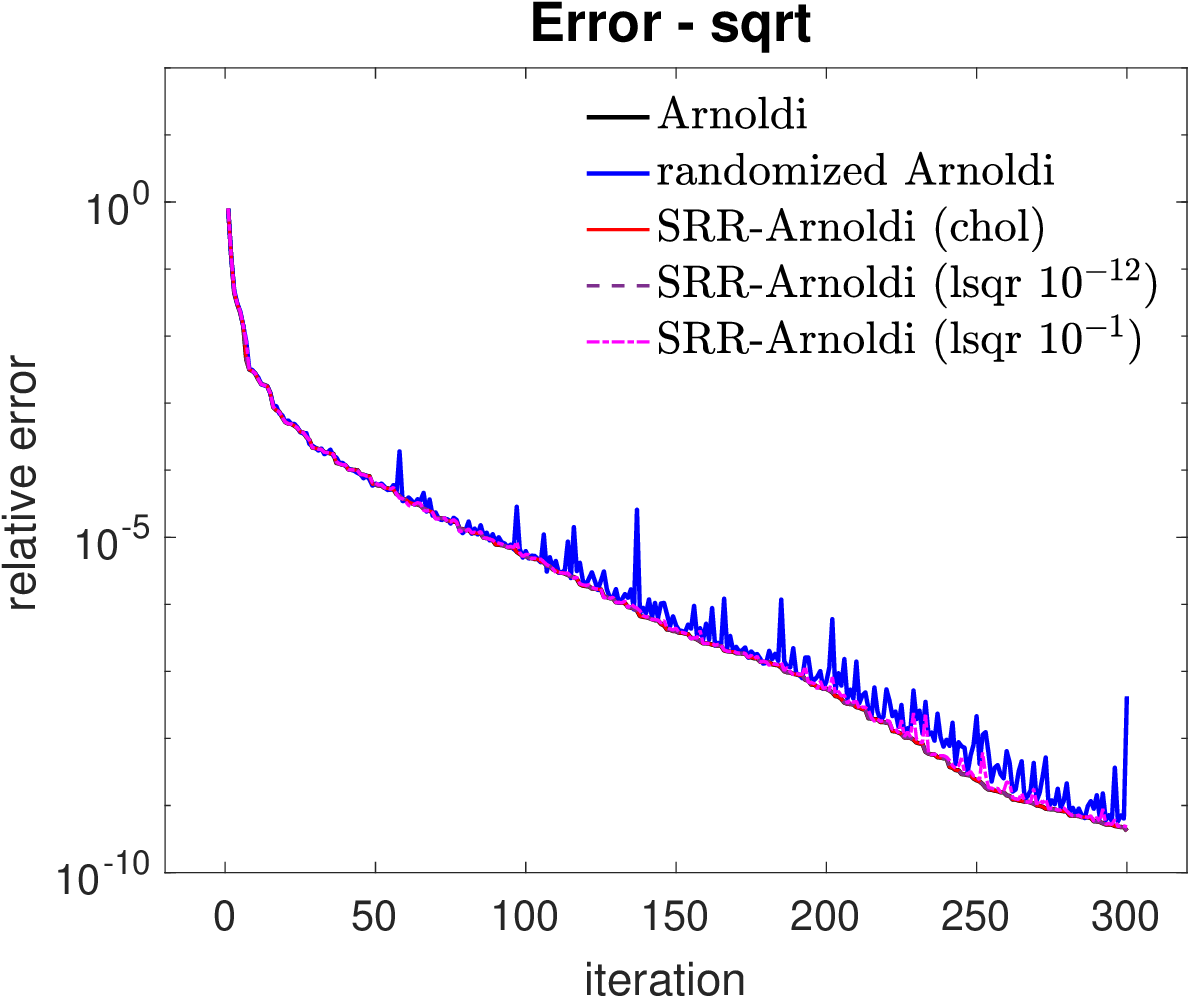}
    \includegraphics[width = \figsizeT]{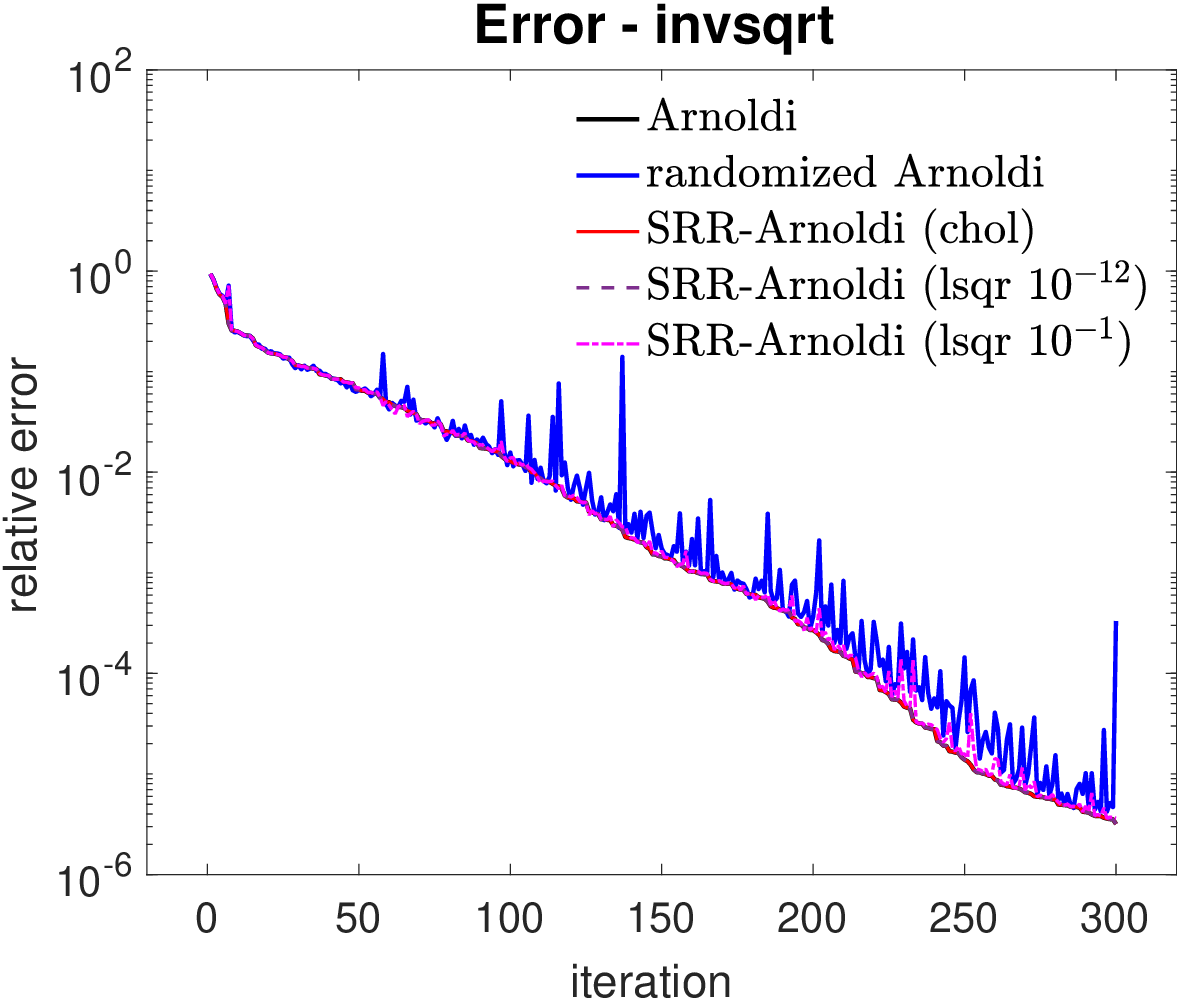}
    \includegraphics[width = \figsizeT]{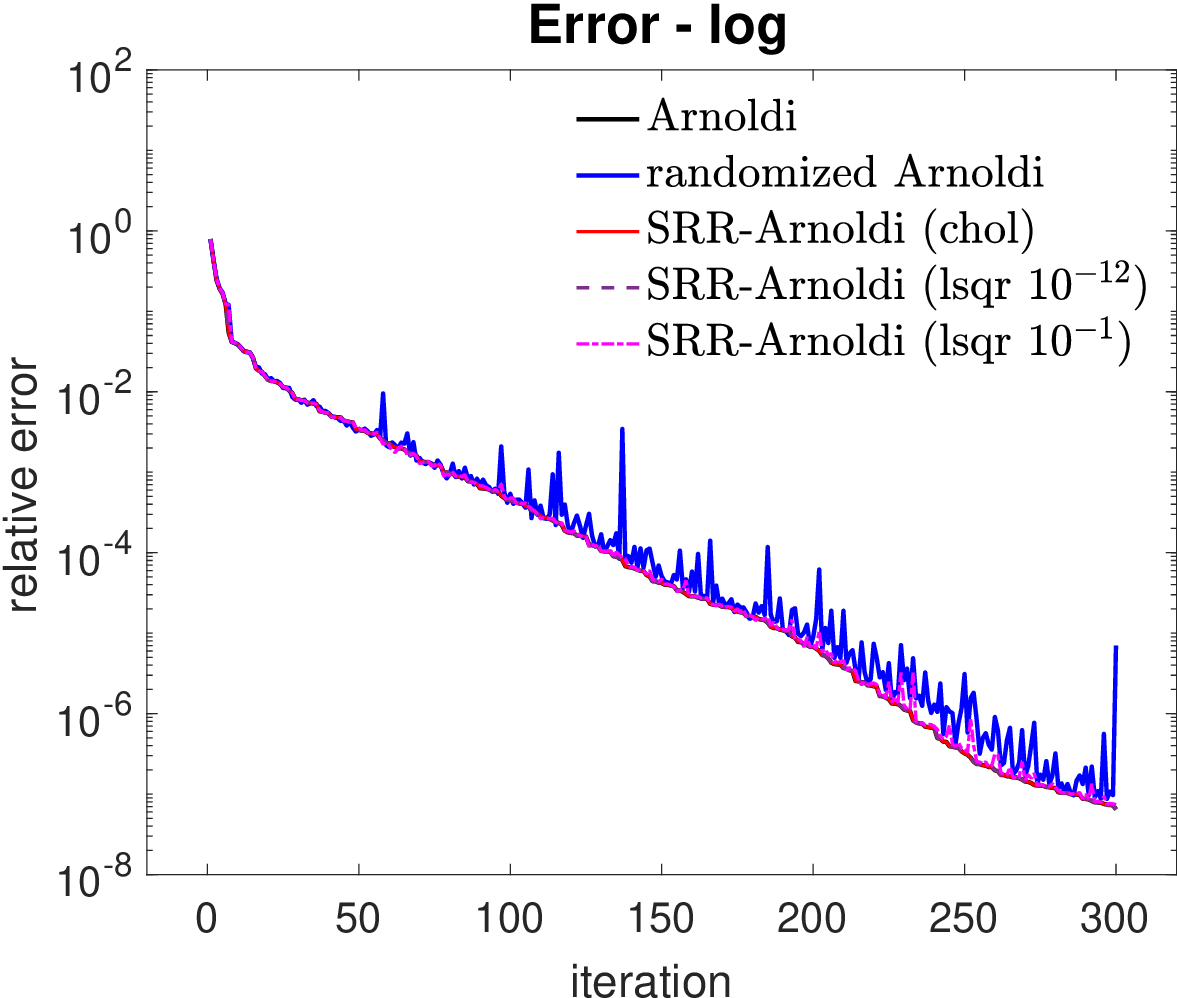}
    \includegraphics[width = \figsizeT]{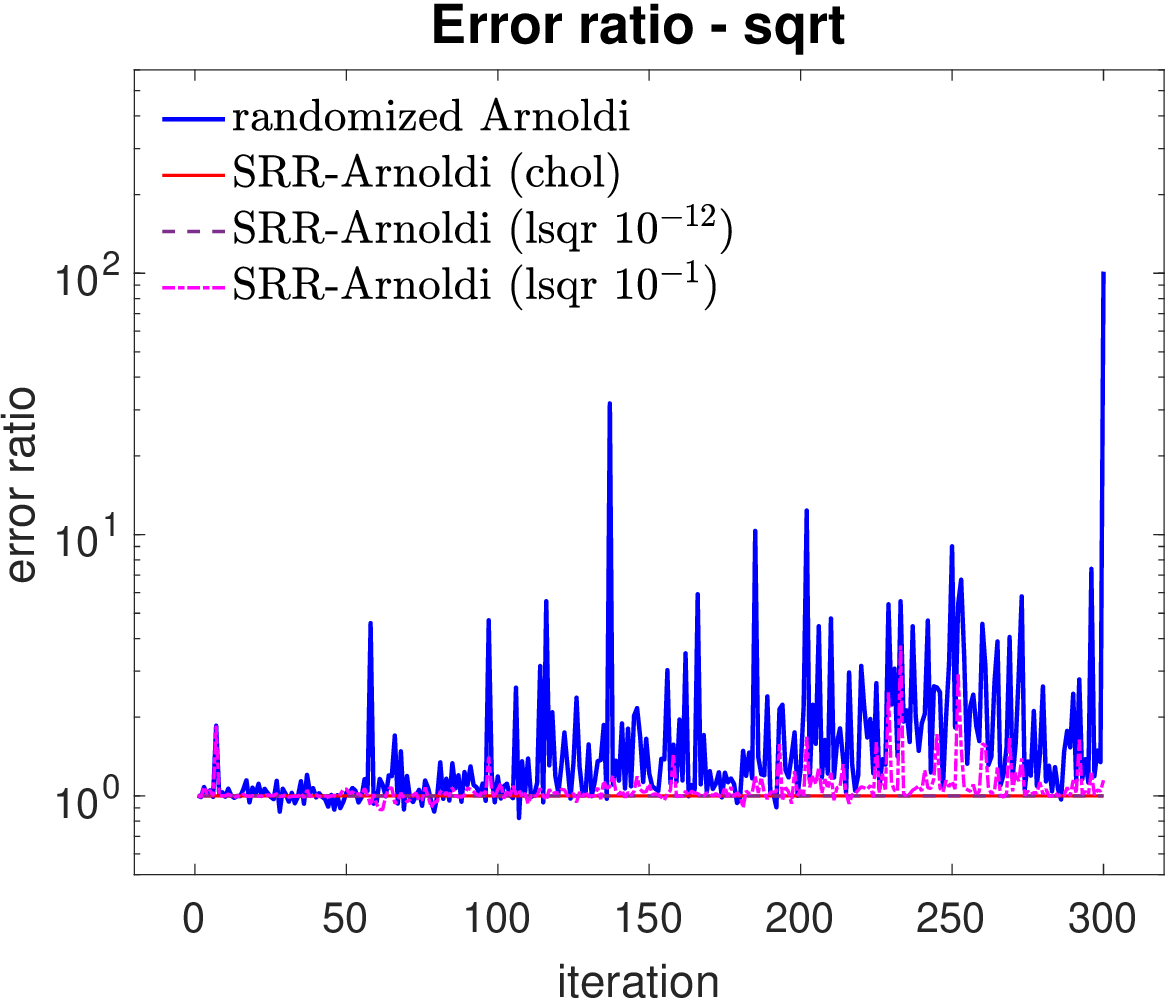}
    \includegraphics[width = \figsizeT]{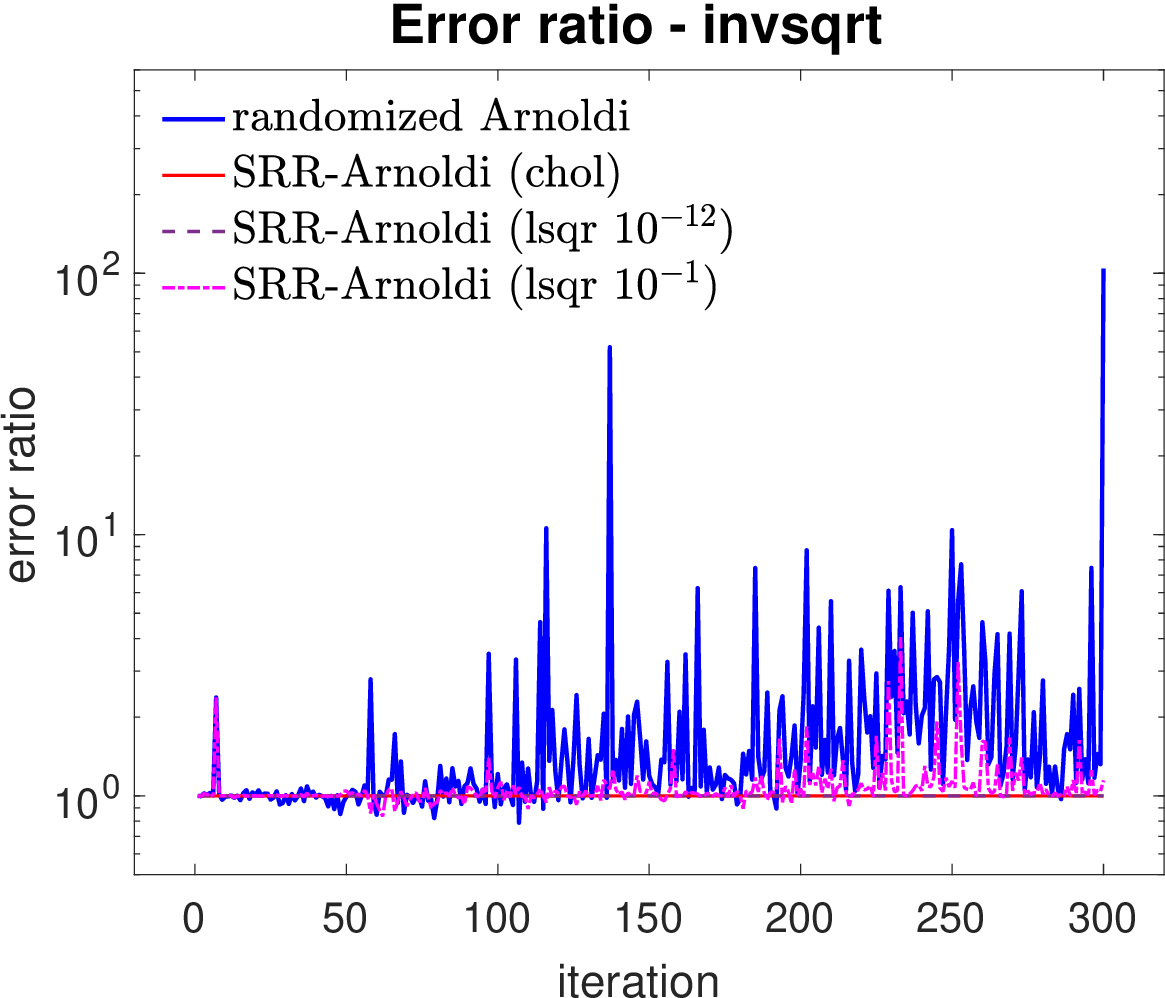}
    \includegraphics[width = \figsizeT]{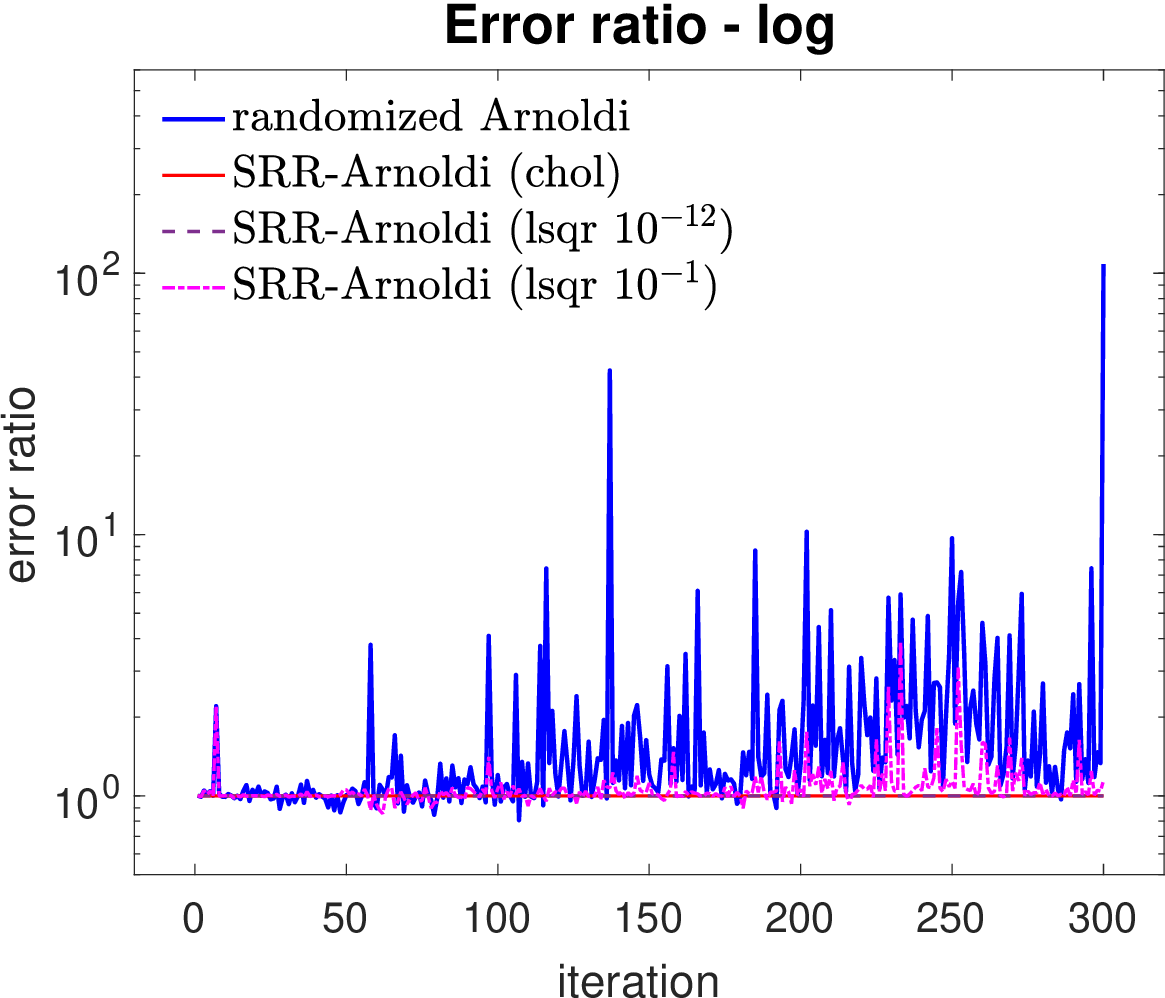}
    \caption{Convergence history for the approximation of $f(A)b$ for the test matrix in \cref{subsubsec:experiments-matfun-convergence}, comparing standard Arnoldi \cref{eqn:fAq-arnoldi-orth}, randomized Arnoldi \cref{eqn:fAq-arnoldi-rand} and SRR-Arnoldi (\cref{algo:matfun}). The curves for standard Arnoldi and SRR-Arnoldi using Cholesky or LSQR with tolerance $10^{-12}$ are completely overlapping. Top: relative errors. Bottom: ratios with respect to the error of the standard Arnoldi approximation.
	\label{fig:fAb-comparison}}
\end{figure}

\subsubsection{Execution time}
\label{subsubsec:experiments-matfun-time}

In this section, we compare the execution times of the different methods for the approximation of $f(A) b$. 
Recall that for SRR-Arnoldi, the least-squares problem~\cref{eqn:correction-least-squares-problem} must be solved every time the approximation to $f(A)b$ is computed. However, evaluating this approximation only once in the final iteration would not be realistic as it requires knowing in advance the number of iterations needed to reach a target accuracy. Therefore, to ensure a fair comparison between the different approaches, we compute the approximation to $f(A) b$ every $10$ iterations with all methods. We note that, on the other hand, the timings in \cite{CKN24} only take into account the cost of the least-squares solve for the computation of the final solution, which is only realistic if the number of iterations required to reach the target accuracy is known in advance.

The setup for this experiment is similar to \cref{subsubsec:experiments-matfun-convergence}, with the main difference that we consider a non-symmetric matrix: we define $A = Q^{\Ttran} B Q \in \R^{n \times n}$, where $Q$ is the discrete cosine transform matrix and $B$ is an upper bidiagonal matrix whose diagonal entries $B_{ii}$ are defined as follows: for $1 \le k \le 4$, the entries $B_{ii}$ with $1 + (k-1)n/4\le i \le kn/4$ are given by independent Gaussian random variables with distribution $\mathcal{N}(10^{k-1}, 10^{k-3})$. For matrices of size $\{10000, 20000, 40000, 80000\}$, all the entries on the first upper diagonal of $B$ are independent Gaussian random variables with mean zero and variances $\{ 0.37^2, 0.32^2, 0.30^2, 0.29^2 \}$, respectively. These parameters have been tuned to ensure that the number of iterations needed to reach the target accuracy $10^{-6}$ is similar across all tests.
As in \cref{subsubsec:experiments-matfun-convergence}, we compare the Arnoldi approximation \cref{eqn:fAq-arnoldi-orth} with the randomized Arnoldi approximation \cref{eqn:fAq-arnoldi-rand} and three variants of SRR-Arnoldi, using Cholesky and LSQR with tolerances $10^{-12}$ and $10^{-1}$ to solve the least-squares problem \cref{eqn:correction-least-squares-problem}.  
We set $d = 1500$, and stop each method when the relative error of the approximate solution is below a tolerance of $10^{-6}$ with respect to a reference solution calculated by running the standard Arnoldi algorithm \cref{eqn:fAq-arnoldi-orth} until it reached a relative accuracy of $10^{-8}$. The execution times and iteration counts are reported in \cref{fig:fAb-ns-times-iters}. We observe that randomized Arnoldi sometimes requires more iterations to converge compared to the other methods (most likely due to the spikes observed in \cref{subsubsec:experiments-matfun-convergence}), and that SRR-Arnoldi with Cholesky is usually only slightly more expensive than~\cref{eqn:fAq-arnoldi-rand}. 
When the least-squares problem \cref{eqn:correction-least-squares-problem} in SRR-Arnoldi is solved with LSQR, a tolerance of $10^{-12}$ significantly increases its runtime with respect to solving \cref{eqn:correction-least-squares-problem} with a Cholesky factorization, while a loose tolerance of $10^{-1}$ slightly increases the number of iterations but often slightly reduces the runtime.

\begin{figure}[t]
    \centering
    \includegraphics[width = \figsizeT]{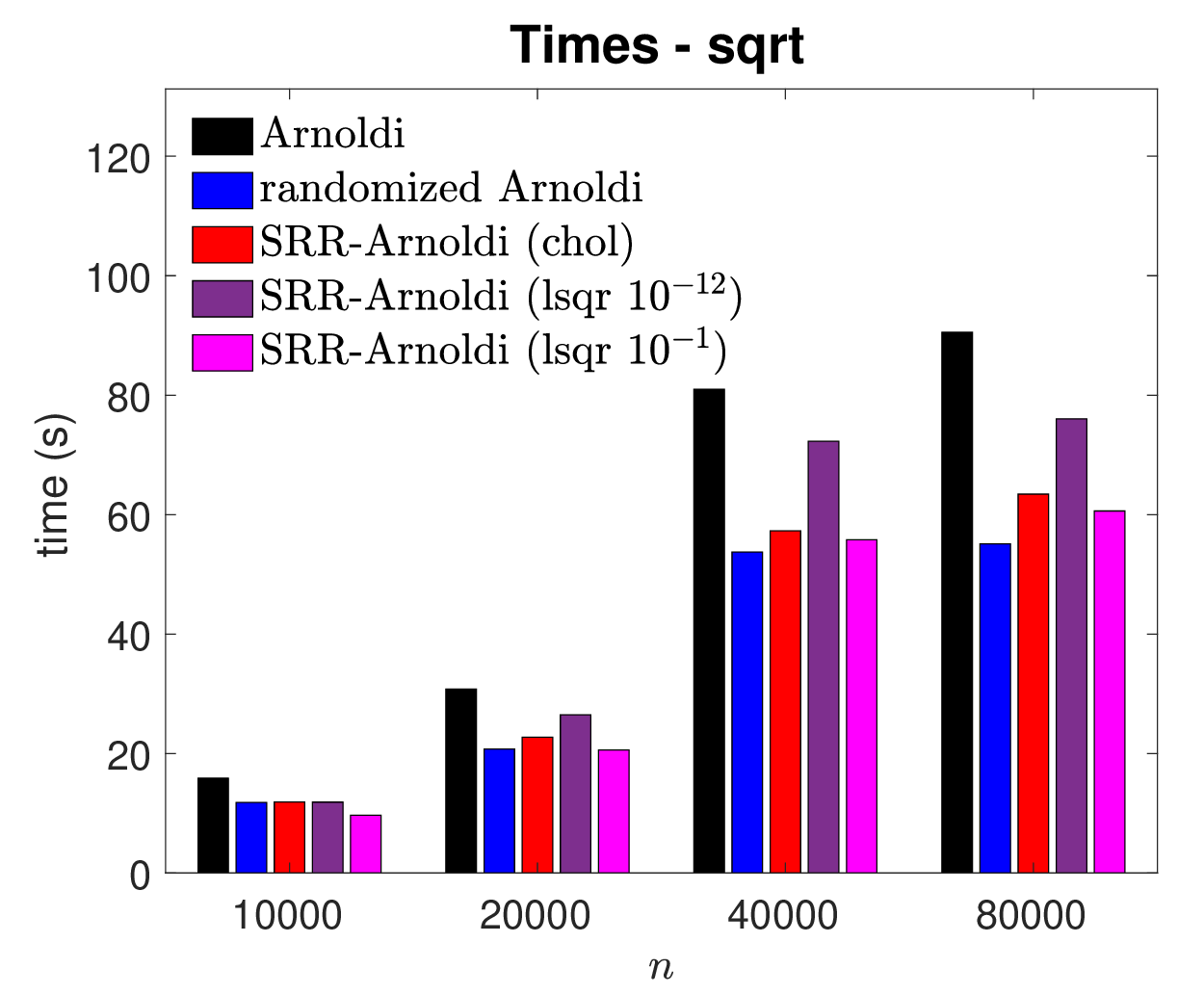}
    \includegraphics[width = \figsizeT]{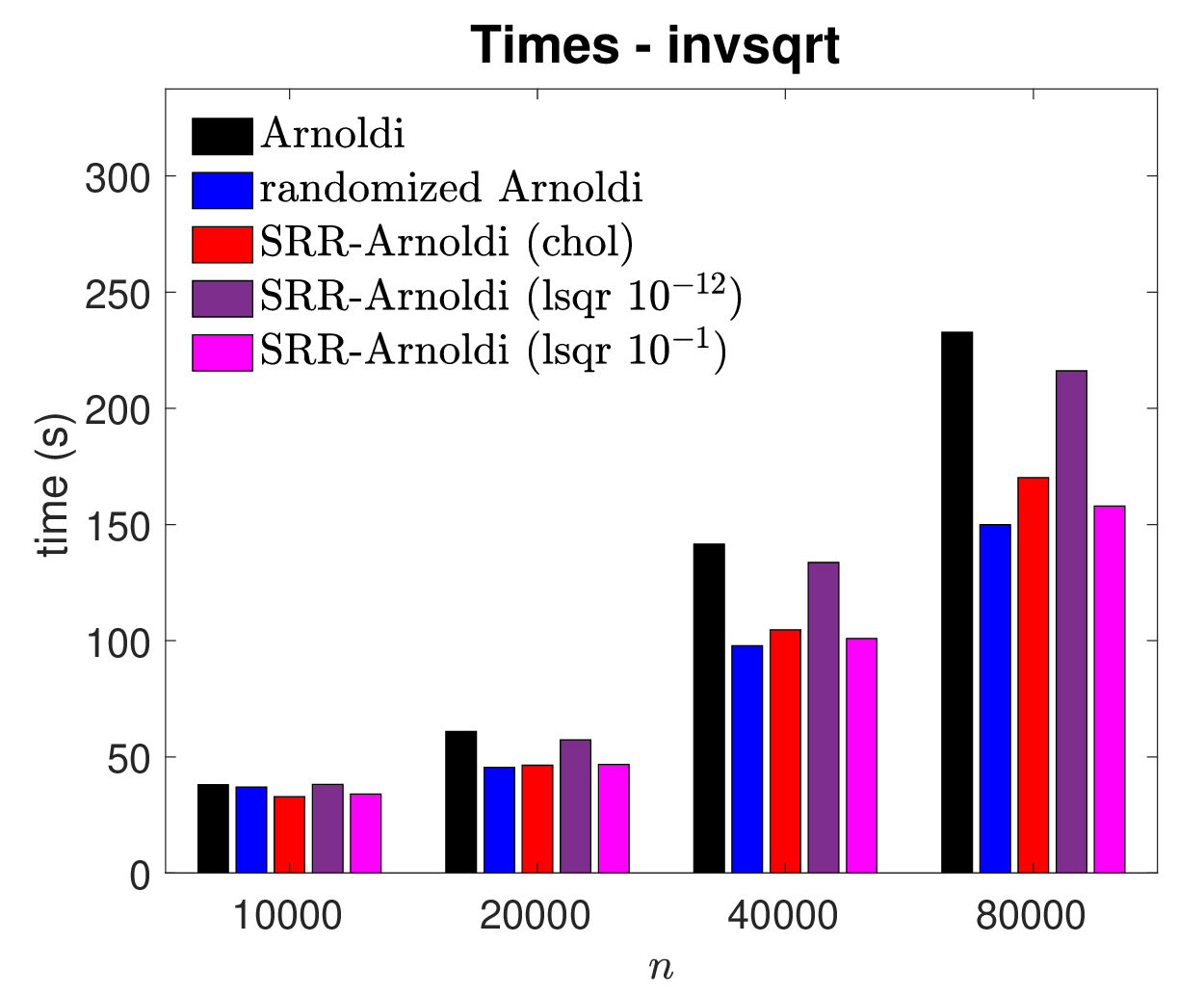}
    \includegraphics[width = \figsizeT]{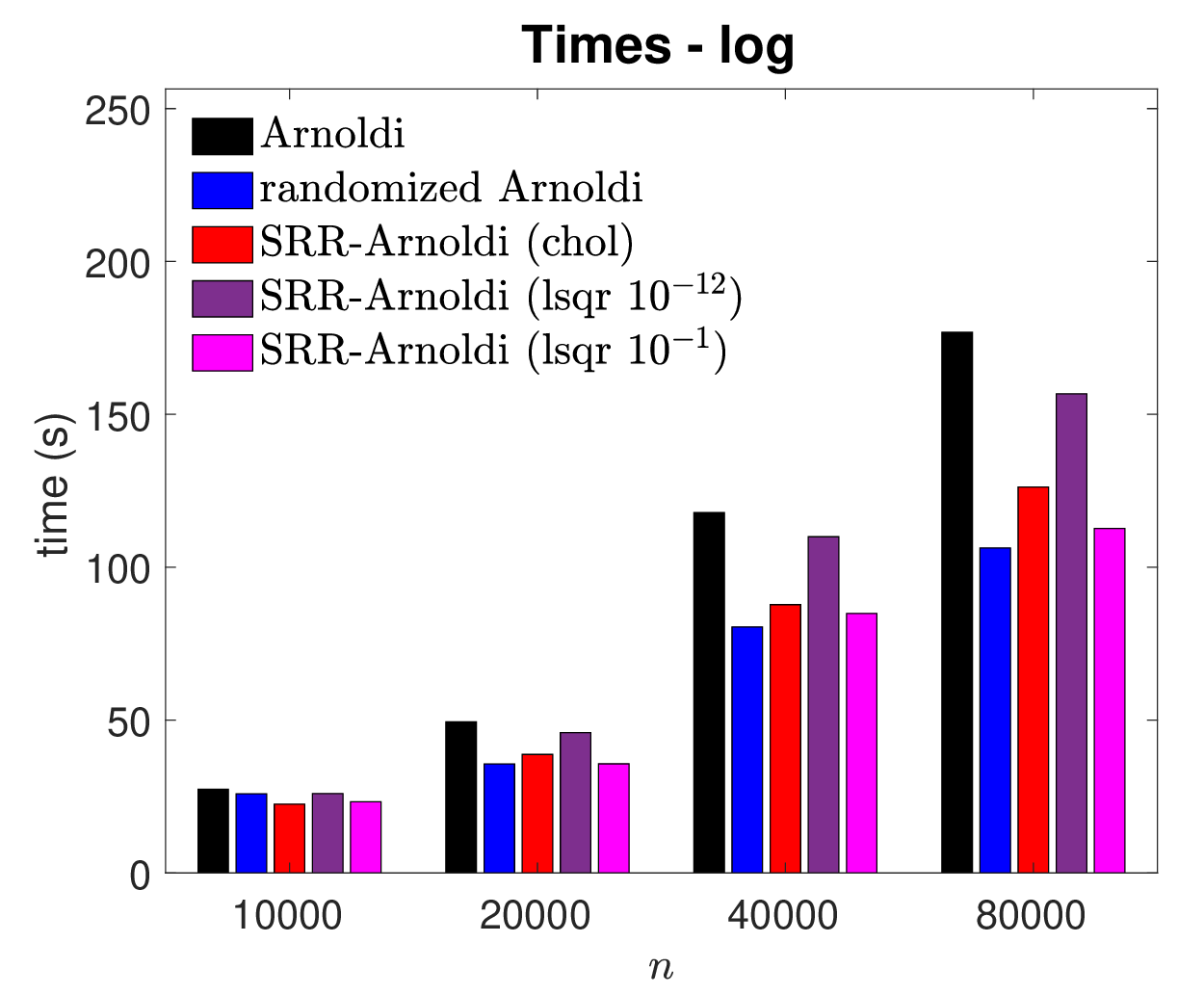}
	\includegraphics[width = \figsizeT]{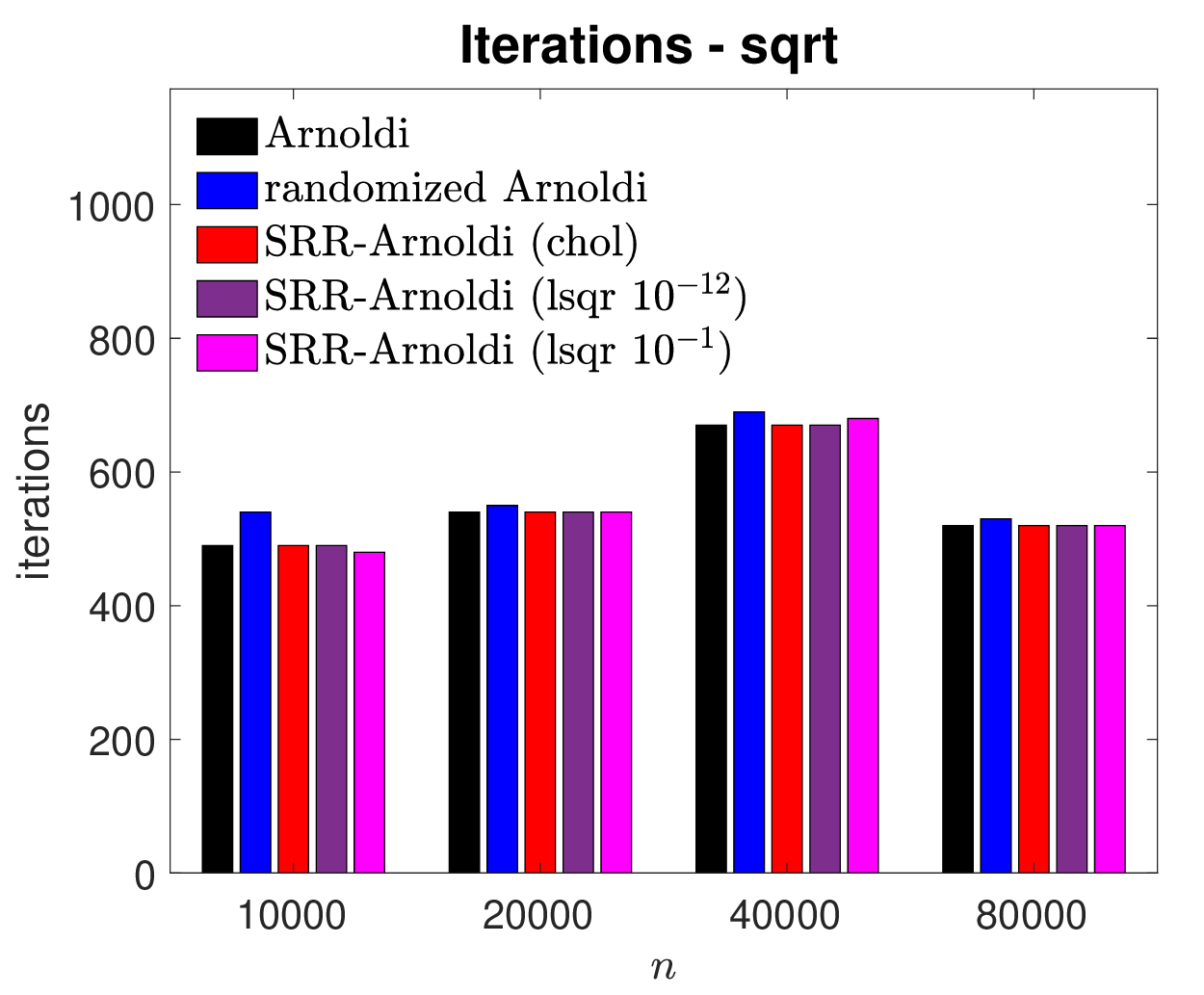}
    \includegraphics[width = \figsizeT]{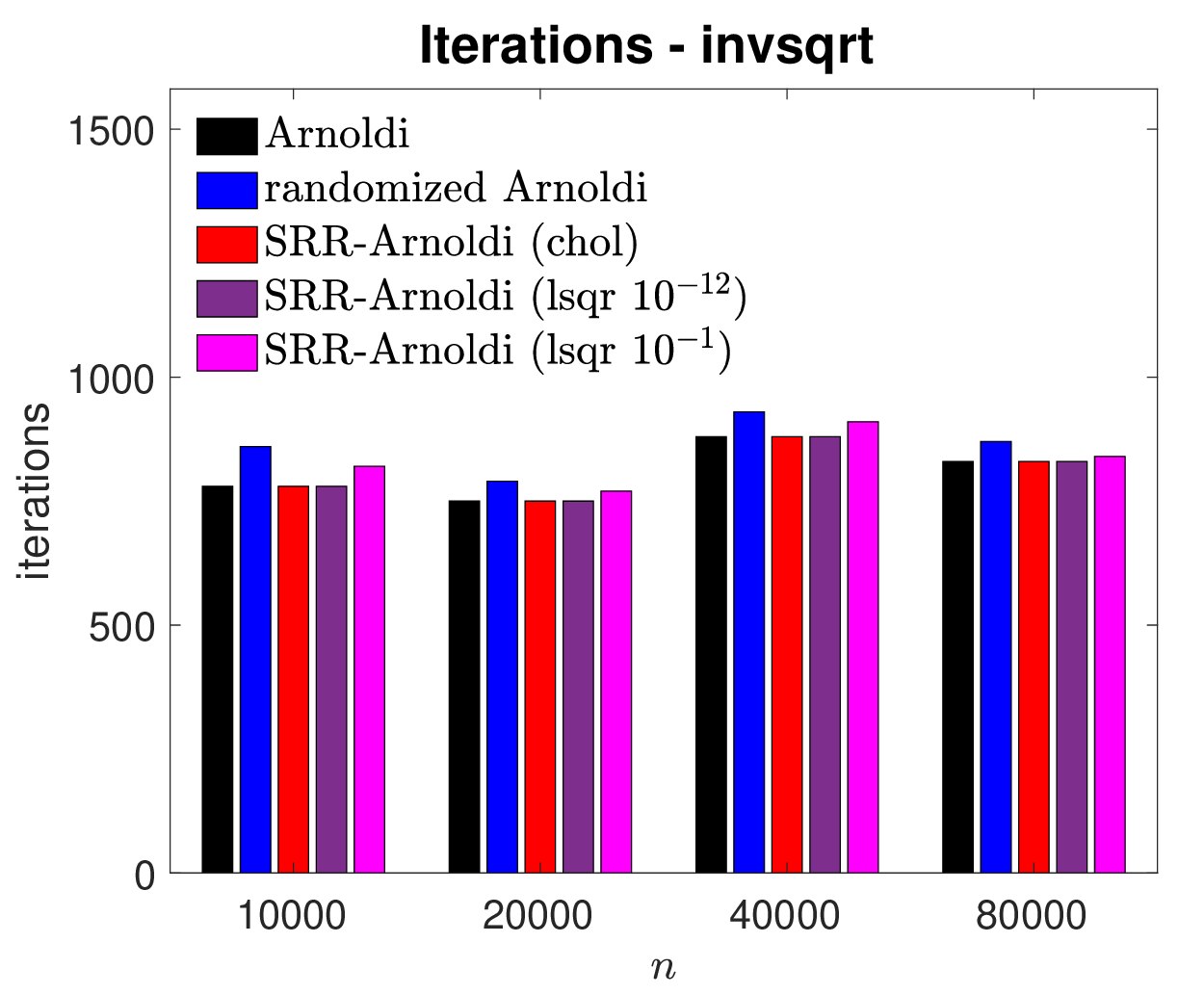}
    \includegraphics[width = \figsizeT]{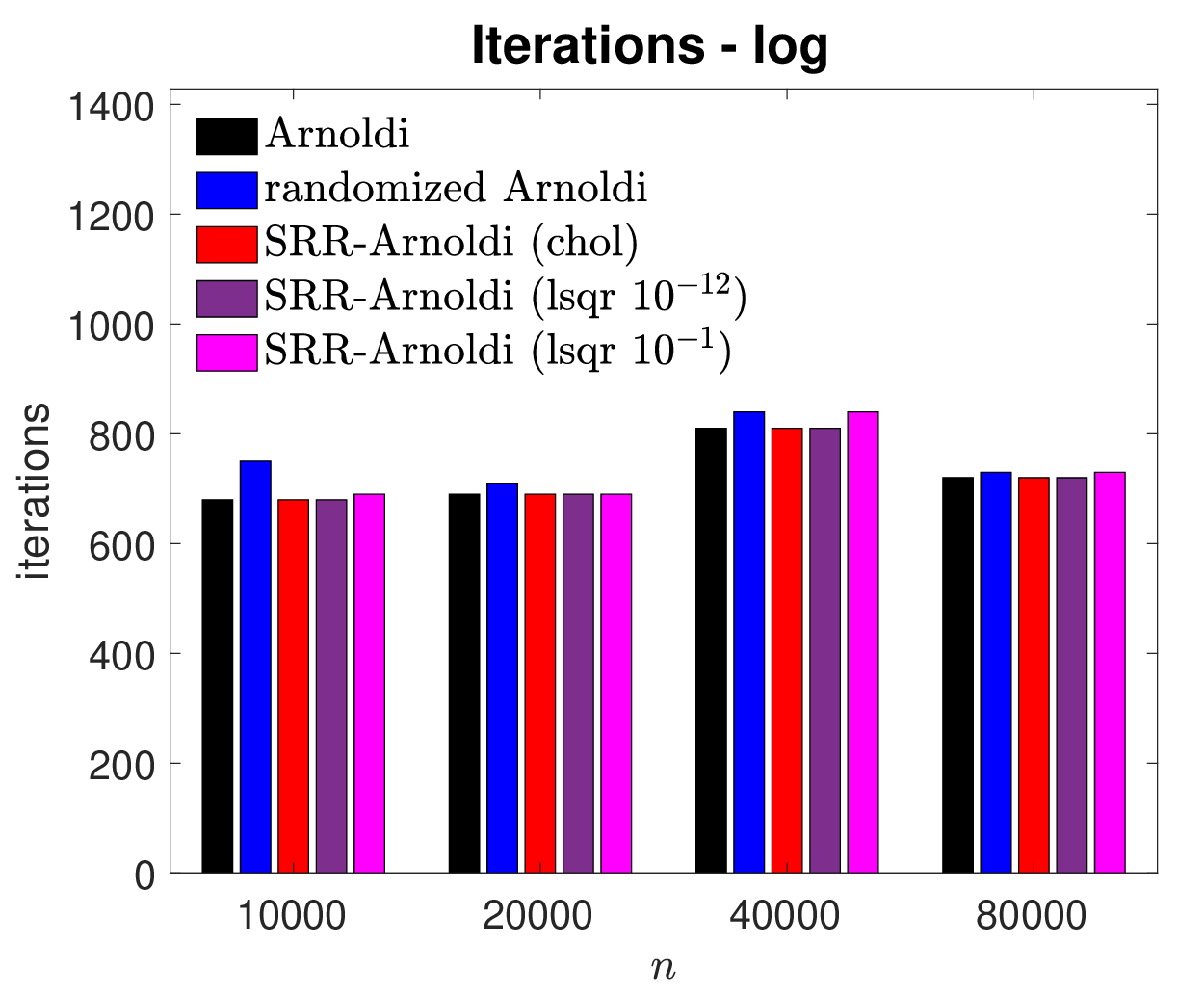}
    \caption{Times and iterations for the approximation of $f(A)b$ with synthetic non-symmetric $A$ in \cref{subsubsec:experiments-matfun-time}, comparing standard Arnoldi \cref{eqn:fAq-arnoldi-orth}, randomized Arnoldi \cref{eqn:fAq-arnoldi-rand} and SRR-Arnoldi (\cref{algo:matfun}).
	\label{fig:fAb-ns-times-iters}}
\end{figure}

\subsubsection{Square root of a graph Laplacian}
\label{subsubsec:experiments-matfun-graphlap}

In this section, we consider the Laplacians of three directed graphs and compute the action of their square root on a normalized random vector $b$; we refer to \cite[Sec.~6.2]{BenziBoito20} for details on fractional powers of a graph Laplacian and some of their applications. We use the unweighted graphs associated with the nonsymmetric matrices \texttt{big}, \texttt{foldoc}, and \texttt{p2p-Gnutella30} available from the Sparse Matrix Collection, whose sizes are, respectively, $n = 13209$, $13356$ and $36682$. We plot the convergence curves for the computation of $f(A) b$ and the times for the different methods in \cref{fig:fAb-graphlap-err,fig:fAb-graphlap-times}, setting $d = 1000$ and stopping when the relative accuracy of the approximate solution is below $10^{-6}$, where the reference solution is computed by running the standard Arnoldi algorithm \cref{eqn:fAq-arnoldi-orth} until it reaches a relative accuracy of $10^{-8}$. We observe a similar behavior to the synthetic examples in \cref{subsubsec:experiments-matfun-time}. For the graph \texttt{foldoc}, SRR-Arnoldi with Cholesky or LSQR with tolerance $10^{-1}$ is even slightly faster than randomized Arnoldi.

\begin{figure}[t]
    \centering
    \includegraphics[width = \figsizeT]{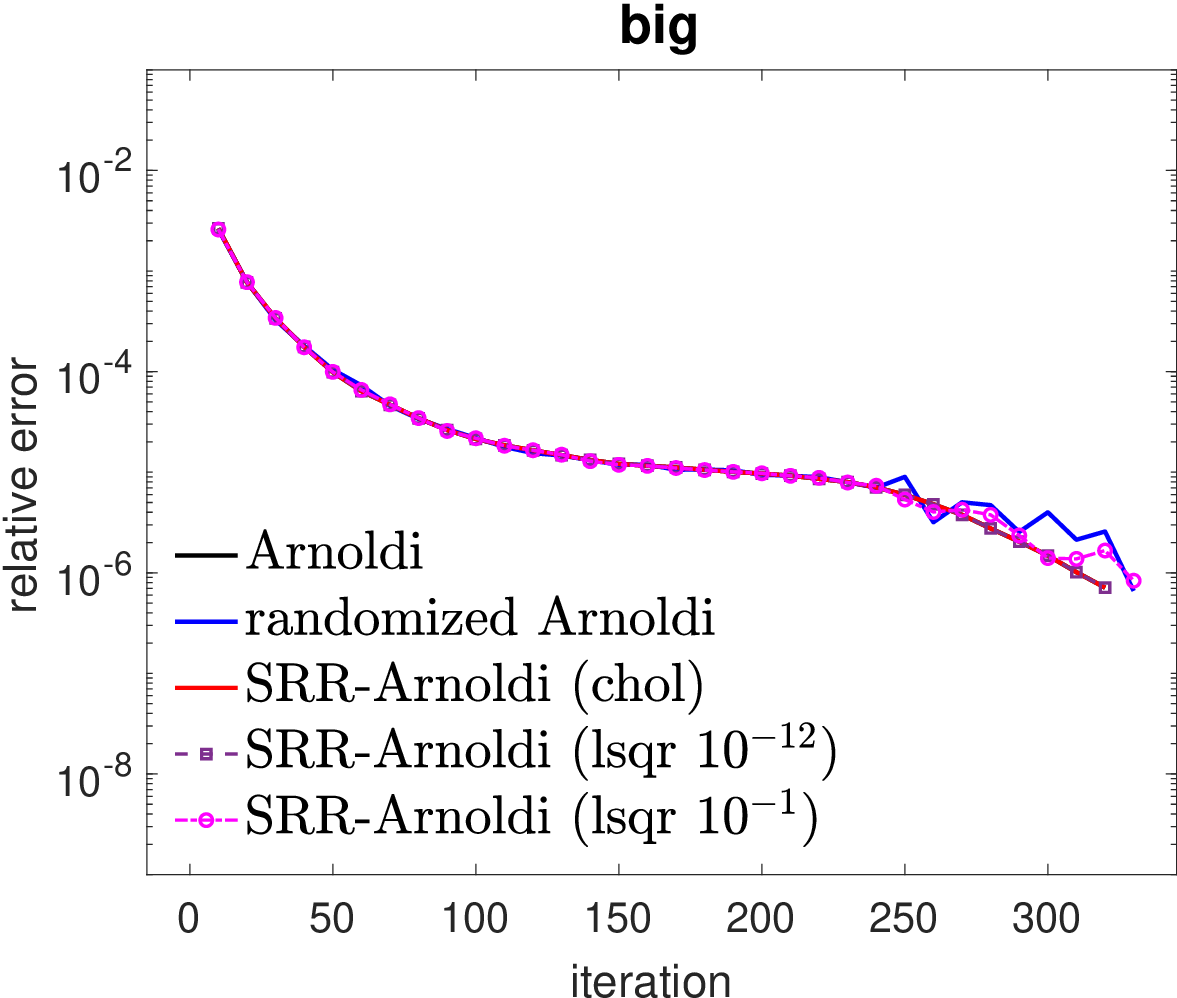}
    \includegraphics[width = \figsizeT]{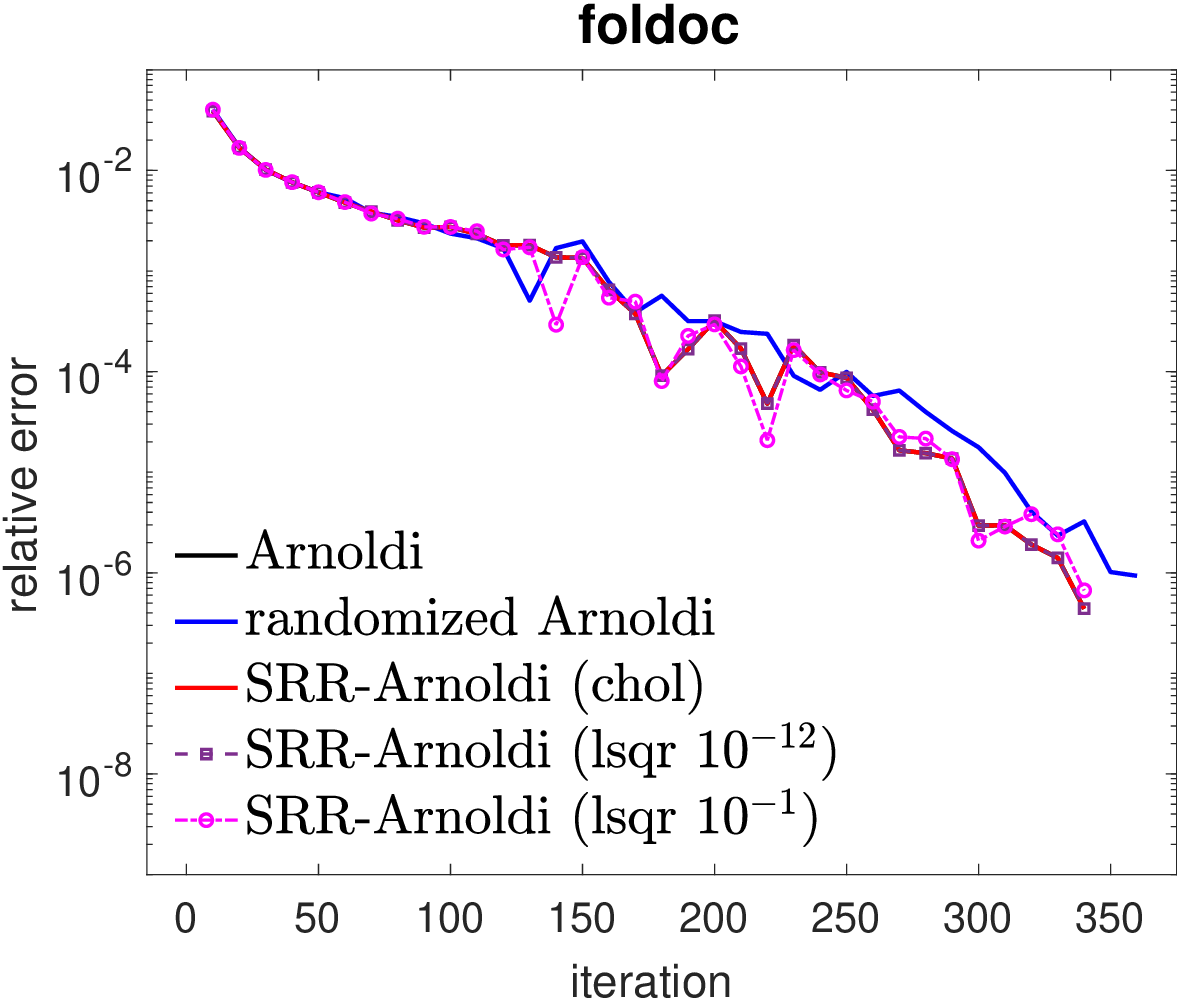}
    \includegraphics[width = \figsizeT]{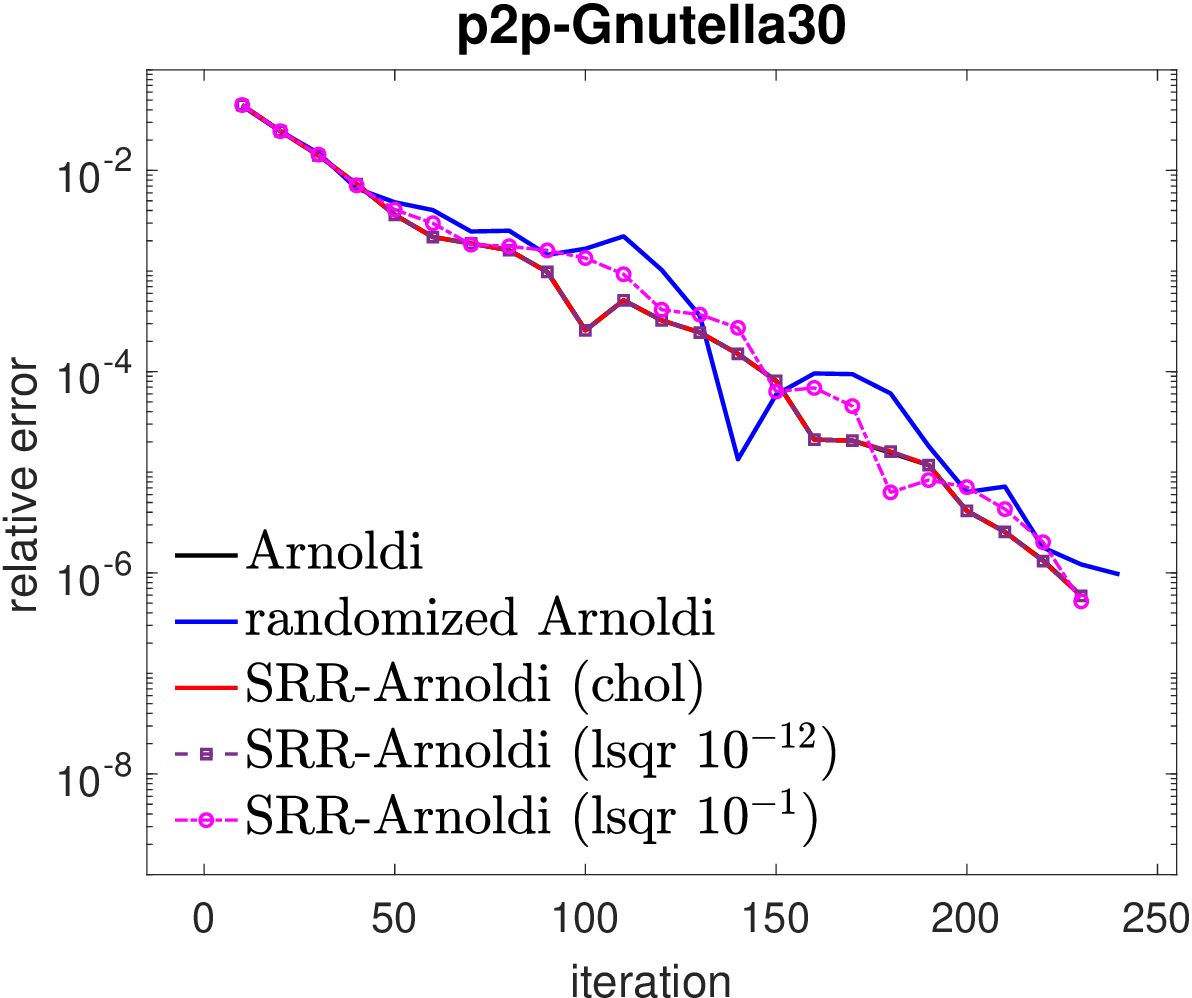}
	\caption{Errors for the computation of $A^{1/2}b$ for the Laplacians of the graphs \texttt{big}, \texttt{foldoc} and \texttt{p2p-Gnutella30} with standard Arnoldi \cref{eqn:fAq-arnoldi-orth}, randomized Arnoldi \cref{eqn:fAq-arnoldi-rand} and SRR-Arnoldi (\cref{algo:matfun}). The curves for standard Arnoldi and SRR-Arnoldi using Cholesky or LSQR with tolerance $10^{-12}$ are completely overlapping.
	\label{fig:fAb-graphlap-err}}
\end{figure}

\begin{figure}[t]
    \centering
    \includegraphics[width = 0.45\textwidth]{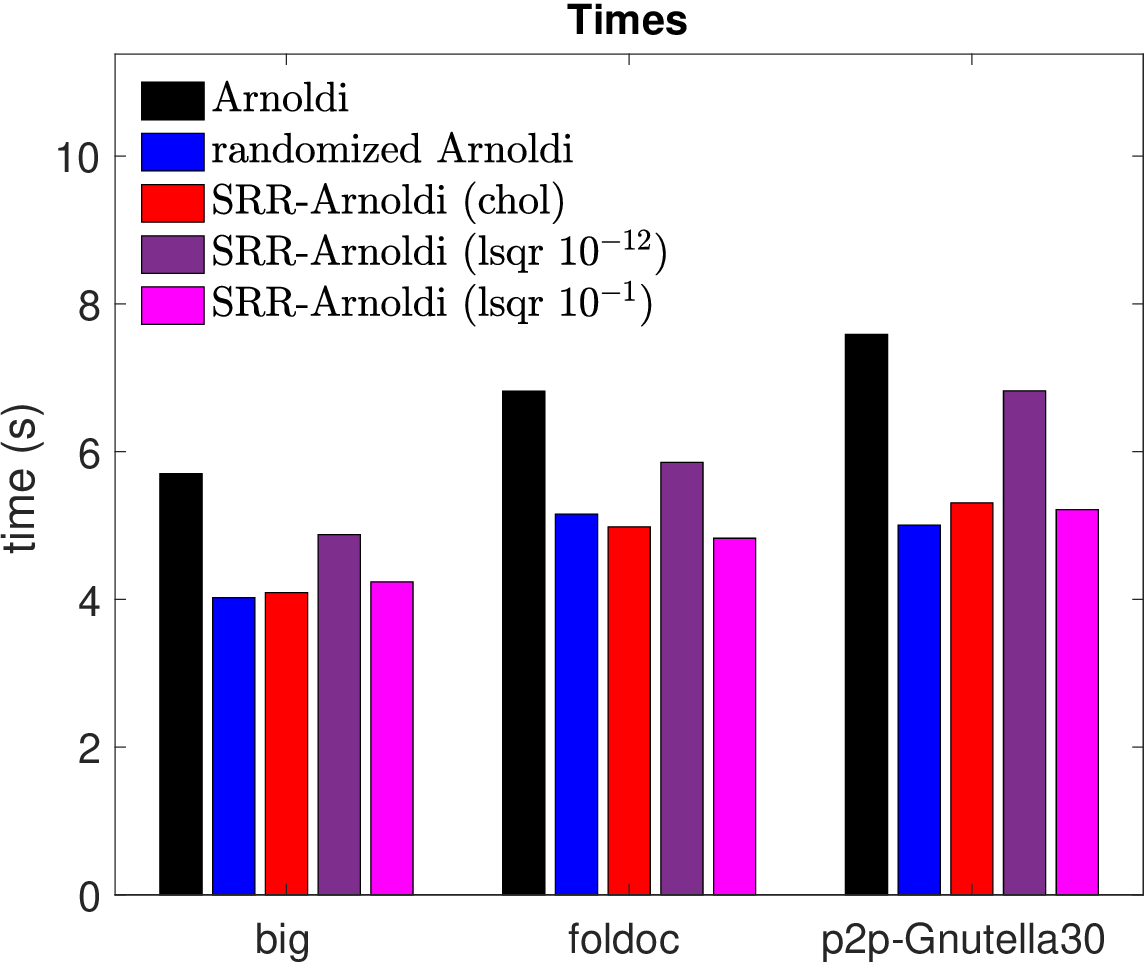}
    \caption{Times for the approximation of $A^{1/2}b$ for the Laplacians of the graphs \texttt{big}, \texttt{foldoc} and \texttt{p2p-Gnutella30} with standard Arnoldi \cref{eqn:fAq-arnoldi-orth}, randomized Arnoldi \cref{eqn:fAq-arnoldi-rand} and SRR-Arnoldi (\cref{algo:matfun}). 
	\label{fig:fAb-graphlap-times}}
\end{figure}

\section{Conclusions}
In this paper, we proposed a modification of the randomized Arnoldi process that restores similarity with the Hessenberg matrix from the standard Arnoldi method by enforcing orthogonality between the last Arnoldi vector and the previous subspace. 
Our analysis and experiments show that the modified process recovers the accuracy and robustness of the standard Arnoldi process while preserving the efficiency of the randomized variant.

\bibliography{ref}
\bibliographystyle{siamplain}
\end{document}